\newtheorem{theorem}{Theorem}[section]
\newtheorem{lemma}[theorem]{Lemma}
\newtheorem{conjecture}[theorem]{Conjecture}
\newtheorem{corollary}[theorem]{Corollary}
\theoremstyle{definition}
\newtheorem{definition}[theorem]{Definition}
\theoremstyle{remark}
\newtheorem{remark}[theorem]{Remark}
\numberwithin{equation}{section}
\DeclareMathOperator{\vol}{vol}
\DeclareMathOperator{\cl}{cl}
\DeclareMathOperator{\conv}{conv}
\newcommand{\const}{{\rm const}}
\newcommand{\where}{\mathop{\ |\ }}
\renewcommand{\epsilon}{\varepsilon}
\renewcommand{\phi}{\varphi}
\renewcommand{\kappa}{\varkappa}
\begin{document}

\title{Symplectic polarity and Mahler's conjecture}

\author{Mark Berezovik}

\address{Mark Berezovik, School of Mathematical Sciences, Tel Aviv University, Israel,  69978}

\email{m.berezovik@gmail.com}

\thanks{Mark Berezovik was supported by Horizon Europe ERC Grant number 101045750 (HodgeGeoComb) and partially supported by the ISF grant No. 938/22}

\author{Roman Karasev}

\address{Roman Karasev, Institute for Information Transmission Problems RAS, Bolshoy Karetny per. 19, Moscow, Russia 127994 and Moscow Institute of Physics and Technology, Institutskiy per. 9, Dolgoprudny, Russia 141700}

\email{r\_n\_karasev@mail.ru}
\urladdr{http://www.rkarasev.ru/en/}

\thanks{The research of R. Karasev was carried out within the state assignment of Ministry of Science and Higher Education of the Russian Federation for IITP RAS}

\subjclass[2010]{52B60, 53D99, 37J10}

\begin{abstract}
We state a conjecture about the volume of symplectically self-polar convex bodies and show that it is equivalent to Mahler's conjecture concerning the volume of a convex body and its Euclidean polar. We also establish lower and upper bounds for symplectic capacities of symplectically self-polar bodies.
\end{abstract}

\maketitle
\section{Introduction}

This paper is motivated by the (symmetric) Mahler's conjecture from \cite{mahler1939} which asserts that for any centrally symmetric convex body $K\subset \mathbb R^n$ and its polar $K^\circ$ one has
\[
\vol K\cdot \vol K^\circ \ge \frac{4^n}{n!}.
\]
In \cite{aaok2013} it was shown that Mahler's conjecture follows from Viterbo's conjecture \cite{vit2000} in symplectic geometry in the form of the following inequality
\[
\vol X \ge \frac{c_{EHZ}(X)^n}{n!},
\]
where $X\subset\mathbb R^{2n}$ is convex and $c_{EHZ}(X)$ is the Ekeland--Hofer--Zehnder symplectic capacity. In order to confirm Mahler's conjecture it is sufficient to prove Viterbo's conjecture for centrally symmetric convex bodies. Note that Viterbo's conjecture has recently been disproved~\cite{haimkislev2024} for convex bodies that are not centrally symmetric, so the assumption of central symmetry is important here.
 
Our main idea is to state another conjecture of symplectic flavor that turns out to be equivalent (see Theorem~\ref{theorem:mahler-polar}) to Mahler's conjecture. Consider the standard symplectic form $\omega$ in $\mathbb R^{2n}$. Similar to the Euclidean case, the symplectic polar of a convex body $X\subset \mathbb R^{2n}$, containing the origin in its interior, is defined by
\[
X^\omega = \{y\in \mathbb R^{2n} \where \forall x\in X\ \omega(x,y)\le 1\}.
\]
The symplectic polar $X^\omega$ differs from the Euclidean polar $X^\circ$ by a complex rotation $X^\omega = JX^\circ$, where $J$ is the multiplication by $\sqrt{-1}$ under the standard identification $\mathbb C^n\cong\mathbb R^{2n}$. The symplectic polar is a natural and well known object (see, for example, \cite[Section~2.2]{degosson2022} and \cite{degosson2022-2,degosson2023}). Here we mostly study the properties of convex bodies which are symplectically self-polar (boundaries of such bodies in the plane, $2n=2$, are known as \emph{Radon curves} and are widely used, see~\cite{martiniswanepoel2006}).

\begin{conjecture}
\label{conjecture:polar-volume}
For any $X\subset \mathbb R^{2n}$ such that $X=X^\omega$ the following inequality holds
\[
\vol X \ge \frac{2^n}{n!}.
\]
\end{conjecture}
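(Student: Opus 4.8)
The plan is to reduce the statement to a sharp Mahler-type volume-product inequality in half the dimension, and I would begin by showing that self-polarity forces central symmetry, which is what lets the (symmetric) Mahler inequality enter. Writing $\Phi(X)=JX^\circ$, the identity $(JA)^\circ=JA^\circ$ (immediate from $J^{T}=-J=J^{-1}$) gives $\Phi^2(X)=J\,(JX^\circ)^\circ=J^2(X^\circ)^\circ=-X$. Hence a fixed point $X=\Phi(X)=X^\omega$ satisfies $X=\Phi(X)=\Phi^2(X)=-X$, so $X$ is centrally symmetric. Since $J$ is volume preserving, the same relation $X=JX^\circ$ yields $\vol X=\vol X^\circ$, and as $J$ is a linear symplectomorphism also $c_{EHZ}(X)=c_{EHZ}(X^\circ)$.

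Next I would extract a Mahler pair from $X$. Fix a Lagrangian splitting $\mathbb R^{2n}=L\oplus L'$ with $L'=JL$ (say $L=\mathbb R^n_q$, $L'=\mathbb R^n_p$). Using $X^\circ=JX$ together with the section--projection duality $(X\cap L)^{\circ_L}=P_L(X^\circ)$, one computes that the $q$-section $K:=\{q:(q,0)\in X\}$ and the $p$-projection of $X$ are Euclidean polar to each other in $\mathbb R^n$, i.e.\ $P_{L'}(X)=K^\circ$. This is the mechanism behind Theorem~\ref{theorem:mahler-polar}: a symplectically self-polar body in $\mathbb R^{2n}$ canonically carries a polar pair $(K,K^\circ)$ in $\mathbb R^n$, and slicing $\vol X=\int_{K^\circ}\vol_n\bigl(X\cap(L+p)\bigr)\,dp$, whose central slice is $K$, ties $\vol_{2n}X$ to the Mahler product $\vol_n K\cdot\vol_n K^\circ$. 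Matching the extremal bodies on the two sides---the round ball, and the conjectural symplectic analogues of the Hanner polytopes---is what pins down the sharp constant $2^n/n!$.

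As a parallel symplectic route I would bound the capacity from below, aiming at $c_{EHZ}(X)\ge 2$ for self-polar $X$ (the ball already gives $c_{EHZ}=\pi\ge 2$), for instance through a polarity inequality $c_{EHZ}(X)\,c_{EHZ}(X^\circ)\ge 4$ combined with $c_{EHZ}(X)=c_{EHZ}(X^\circ)$, or by exploiting the symmetry $x\mapsto Jx$ of the closed characteristics on $\partial X=J\,\partial X^\circ$. Together with the volume--capacity inequality $\vol X\ge c_{EHZ}(X)^n/n!$ this would give exactly $\vol X\ge 2^n/n!$. The main obstacle, however, is unavoidable: the lower bound one needs is precisely of the strength of Mahler's conjecture (equivalently, of Viterbo's volume--capacity inequality on these bodies), and both remain open. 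The capacity lower bound is plausibly unconditional, but the passage from it to the sharp volume bound is exactly the hard, still-unresolved step.
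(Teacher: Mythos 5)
The statement you set out to prove is Conjecture~\ref{conjecture:polar-volume} of the paper: it is \emph{open}, and the paper proves (Theorem~\ref{theorem:mahler-polar}) that it is equivalent to Mahler's conjecture, so the paper contains no proof of it and your attempt, as you concede in your last sentences, is not one either. Your first paragraph does match the paper exactly: central symmetry of self-polar bodies via $X=(X^\omega)^\omega=-X$ is the paper's opening lemma of Section~3, and the observation $\vol X=\vol X^\circ$ is precisely where the paper's ``Mahler implies the bound'' direction starts. Your capacity route also parallels the paper's Section~5, where $c_{EHZ}(X)\ge 2+\tfrac1n$ is proved \emph{unconditionally} for $X^\omega\subseteq X$ (Theorem~\ref{theorem:cehz}) --- stronger than the $c_{EHZ}(X)\ge 2$ you hope for --- but not via your suggested product inequality $c_{EHZ}(X)\,c_{EHZ}(X^\circ)\ge 4$, which is not available anywhere (the paper proves the opposite-type bound $c_{EHZ}(X)\,c_{EHZ}(X^\omega)\le\pi^2$, Theorem~\ref{theorem:capacity-bs}); it comes instead from a Schaffer-type girth estimate. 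The bridge from capacity to volume is Viterbo's conjecture, which is open, so this route cannot close.

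The genuine mathematical gap is in your middle paragraph. The pair ``section $K=X\cap L$, projection $P_{L'}(X)=K^\circ$'' is a correct observation, but Fubini slicing does not ``tie $\vol_{2n}X$ to $\vol_n K\cdot\vol_n K^\circ$'' in the direction you need, and certainly not with the sharp constant. By Brunn's theorem the central slice of a centrally symmetric body is the largest one, so slicing gives only the \emph{upper} bound $\vol X\le \vol K\cdot\vol K^\circ$; for a lower bound the best general inequality (Rogers--Shephard type, section times projection) is $\vol K\cdot\vol K^\circ\le\binom{2n}{n}\vol X$, so even granting Mahler in dimension $n$ you would get only $\vol X\ge\binom{2n}{n}^{-1}\,4^n/n!\sim\sqrt{\pi n}\,/\,n!$, which is short of $2^n/n!$ by an exponential factor $2^n$ --- a loss that no tensorization trick can repair (the paper's Theorem~\ref{theorem:subexponential} removes only \emph{sub}exponential losses). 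The paper's equivalence avoids sections and projections entirely: to deduce the conjecture from Mahler it applies Mahler directly to $X\subset\mathbb R^{2n}$ itself, using $\vol X=\vol X^\circ$ to get $(\vol X)^2\ge 4^{2n}/(2n)!$, hence $\vol X\ge c_n\,2^n/n!$ with subexponential $c_n$ by Stirling, and then kills $c_n$ by the $\ell_2$-power trick: $X\oplus_2\cdots\oplus_2 X$ is again symplectically self-polar, and Lemma~\ref{lemma:ellp-sum-vol} converts the bound in dimension $2nm$ back to dimension $2n$ with constant $c_{nm}^{1/m}\to 1$. For the converse direction it uses the body $K\oplus_2 K^\circ$, i.e.\ it \emph{builds} a self-polar body from a Mahler pair rather than extracting a Mahler pair from a self-polar body; your extraction, while true, is quantitatively too lossy to carry the sharp constant. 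Finally, ``matching the extremal bodies on the two sides'' is not an argument --- in sharp inequalities the identification of extremizers is an output of a proof, not an input.
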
 

In the Euclidean case the equation $X=X^\circ$ has a unique solution, the Euclidean unit ball. For the symplectic polarity there are less trivial examples. Consider a convex centrally-symmetric body $K\subset\mathbb R^n$ and its polar $K^\circ$. Take their $\ell_2$-sum $X=K\oplus_2 K^\circ\subset\mathbb R^{2n}$. This is a Lagrangian $\ell_2$-sum, orthogonal in the standard inner product of $\mathbb R^{2n}$. Since the usual polarity takes $\ell_2$-sum to $\ell_2$-sum of polars, then 
\[
X^\circ = K^\circ\oplus_2 K
\] 
and 
\[
X^\omega = JX^\circ = K\oplus_2 K^\circ = X.
\]

These observations lead to the following theorem, whose full proof is given in Section \ref{section:volume-conjecture}.

\begin{theorem}
\label{theorem:mahler-polar}
A lower bound of the form
\[
\vol X \ge c_n \frac{2^n}{n!}
\]
for $X\subset \mathbb R^{2n}$ with $X^\omega = X$ and a sub-exponential $c_n$, for all $n$, is equivalent to the validity of Mahler's conjecture for centrally symmetric bodies in all dimensions. Moreover, if this inequality is valid with a sub-exponential $c_n$, for all $n$, then it is valid in the simple form $\vol X \ge \frac{2^n}{n!}$ as in Conjecture~\ref{conjecture:polar-volume}.
\end{theorem}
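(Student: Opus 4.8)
The plan is to prove the stated equivalence as a cycle of implications, valid in every dimension,
\[
\text{(Mahler)}\ \Longrightarrow\ \Bigl(\vol X\ge\tfrac{2^n}{n!}\Bigr)\ \Longrightarrow\ \Bigl(\vol X\ge c_n\tfrac{2^n}{n!}\ \text{with subexponential }c_n\Bigr)\ \Longrightarrow\ \text{(Mahler)}.
\]
The central implication is immediate (take $c_n=1$), so the content lies in the two outer arrows; the last arrow additionally yields the ``moreover'' statement, since it shows that the subexponential form already forces exact Mahler, which by the first arrow forces the simple form $\vol X\ge 2^n/n!$.

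Before starting I would record that every self-polar body is automatically centrally symmetric with $\vol X=\vol X^\circ$. Indeed, since $X^\omega=JX^\circ$ and $J$ commutes with Euclidean polarity (because $J^{\mathsf T}=-J=J^{-1}$), polarizing $X=JX^\circ$ once more gives $X^\circ=JX$; substituting back yields $X=J^2X=-X$, while $\vol X^\circ=\vol(JX)=\vol X$ because $\det J=1$. This is exactly what lets me apply the symmetric Mahler inequality to an arbitrary self-polar $X$.

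For $\text{(Mahler)}\Rightarrow\bigl(\vol X\ge 2^n/n!\bigr)$ I would apply Mahler in dimension $2n$ to a self-polar $X\subset\mathbb R^{2n}$: by the previous paragraph $(\vol X)^2=\vol X\,\vol X^\circ\ge 4^{2n}/(2n)!$, so $\vol X\ge 4^n/\sqrt{(2n)!}$, and the elementary estimate $\binom{2n}{n}\le 4^n$, i.e. $(2n)!\le 4^n(n!)^2$, upgrades this to the desired $\vol X\ge 2^n/n!$. For the reverse arrow I would start from a centrally symmetric $K\subset\mathbb R^n$ and feed the self-polar body $X=K\oplus_2K^\circ$ of the introduction into the hypothesis. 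With the standard volume formula for orthogonal $\ell_2$-sums, $\vol(A\oplus_2B)=\vol A\,\vol B\,\Gamma(\tfrac a2+1)\Gamma(\tfrac b2+1)/\Gamma(\tfrac{a+b}2+1)$ for $A\subset\mathbb R^{a},\ B\subset\mathbb R^{b}$, the bound $\vol X\ge c_n2^n/n!$ rearranges to
\[
\vol K\,\vol K^\circ\ \ge\ d_n\,\frac{4^n}{n!},\qquad d_n:=c_n\,\frac{n!}{2^n\,\Gamma(\tfrac n2+1)^2}\asymp c_n\sqrt{\tfrac{2}{\pi n}},
\]
so that $d_n$ is again subexponential. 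Thus the symplectic hypothesis already gives Mahler up to a subexponential (indeed polynomial) factor.

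Removing that factor is the step I expect to be the main obstacle, and I would handle it by a tensor-power self-improvement. Writing $P(K):=\vol K\,\vol K^\circ\,n!/4^n$, so that Mahler reads $P(K)\ge1$, the identities $(K_1\times K_2)^\circ=K_1^\circ\oplus_1K_2^\circ$ and $\vol(A\oplus_1B)=\vol A\,\vol B\,a!\,b!/(a+b)!$ show that $P$ is \emph{exactly} multiplicative under Cartesian products, $P(K_1\times K_2)=P(K_1)P(K_2)$, hence $P(K^{\times N})=P(K)^N$. Applying the weak bound $P(L)\ge d_{\dim L}$ to $L=K^{\times N}\subset\mathbb R^{nN}$ gives $P(K)^N\ge d_{nN}$, so $P(K)\ge d_{nN}^{1/N}=\bigl(d_{nN}^{1/(nN)}\bigr)^{n}\to1$ as $N\to\infty$, since $d_k$ is subexponential. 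Therefore $P(K)\ge1$, which is exact Mahler and closes the cycle. (The simple symplectic form could alternatively be derived directly from the subexponential one by the same tensor trick applied to $\oplus_2$ together with Fekete's lemma; but reaching \emph{exact} Mahler seems to require the multiplicativity of $P$ to cancel the polynomial loss incurred by the $\ell_2$-sum.)
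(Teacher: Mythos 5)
Your proof is correct, and it largely shares the paper's machinery: the direction from the self-polar bound to Mahler via the self-polar body $K\oplus_2 K^\circ$, the $\ell_p$-sum volume formulas, and the Cartesian-power argument using $(K_1\times K_2)^\circ=K_1^\circ\oplus_1 K_2^\circ$ to strip the subexponential factor are precisely the paper's steps (the last one is exactly the paper's Theorem~\ref{theorem:subexponential}, which you re-derive inline via the multiplicativity of $P$). Where you genuinely depart from the paper is the forward direction. The paper applies Mahler in dimension $2n$ and then invokes Stirling's formula, so it only obtains $\vol X\ge c_n\,2^n/n!$ with a subexponential $c_n$; as a consequence, for the ``moreover'' part it needs a separate limiting argument, taking $\ell_2$-powers $Y=X\oplus_2\cdots\oplus_2 X$ of the self-polar body itself (which are again self-polar) and letting the number of summands tend to infinity. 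You instead use the elementary inequality $\binom{2n}{n}\le 4^n$, i.e.\ $\sqrt{(2n)!}\le 2^n\,n!$, which upgrades $\vol X\ge 4^n/\sqrt{(2n)!}$ to the exact bound $\vol X\ge 2^n/n!$ with no asymptotics at all. This makes the implication from Mahler to the simple form exact, so your cycle of implications delivers the ``moreover'' statement for free and requires only one limiting argument instead of two. The trade-off is minor: the paper's $\ell_2$-power argument shows that the self-polar inequality self-improves without ever passing through Mahler, whereas your route to the ``moreover'' claim necessarily detours through Mahler; but as a proof of the stated theorem your organization is leaner and the constant in the forward direction is obtained exactly rather than asymptotically.
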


\begin{remark}
The upper bound on the volume of a symplectically self-polar body $X\subset\mathbb R^{2n}$, $\vol X\le \frac{\pi^n}{n!}$, follows directly from the Blaschke--Santal\'o inequality \cite{san1949} and is attained by the unit ball.
\end{remark}

\begin{remark}
A less straightforward construction of symplectically self-polar bodies (polytopes) is given in \cite{berezovik2023}. Those polytopes have minimal possible Ekeland--Hofer--Zehnder capacity and conjecturally have minimal possible volume among all symplectically self-polar bodies of given dimension.
\end{remark}

The paper is organized as follows.

\begin{itemize}
\item
In Section~\ref{section:sub-exp-mahler} we show how to suppress sub-exponential factors in Mahler's conjecture. 
\item
In Section~\ref{section:volume-conjecture} we prove Theorem~\ref{theorem:mahler-polar}.
\item
In Section~\ref{section:reduction} we show that Conjecture~\ref{conjecture:polar-volume} may also be approached with the symplectic reduction suggested in \cite{karasev2021} for Mahler's conjecture.
\item
In Section~\ref{section:capacity} we give lower and upper bounds on the symplectic capacity of symplectically self-polar bodies, the lower bound allowing to directly infer a stronger version of Conjecture~\ref{conjecture:polar-volume} from Viterbo's conjecture. 
\item
In Section~\ref{section:affine} we give a non-trivial lower bound on the affine cylindrical capacity of symplectically self-polar bodies and general centrally symmetric convex bodies.
\item
In Appendix~\ref{section:appendix} we present the parts of the unpublished preprint \cite{akopyan2018capacity} that are essentially used in this paper.
\end{itemize}

\subsection*{Acknowledgments} The authors thank Anastasiia Sharipova, Yaron Ostrover, Maurice de Gosson, and the anonymous referees for useful remarks.

\section{Sub-exponential factors in Mahler's inequality}
\label{section:sub-exp-mahler}

In the proof of Theorem~\ref{theorem:mahler-polar} below it turns out that when deducing one conjecture from another there appear sub-exponential factors. In this section we demonstrate how to suppress those factors. This idea seems to be folklore\footnote{See the discussion at \href{https://terrytao.wordpress.com/2008/08/25/tricks-wiki-article-the-tensor-product-trick/}{Terence Tao's blog;} use the search of ``Mahler'' there.}; in relation to Viterbo's conjecture it is used in \cite{haimkislev2021}. For reader's convenience, we state the results that we need explicitly and present their proofs.

\begin{theorem}
\label{theorem:sub-exponential}
If a variant of Mahler's conjecture $($for convex centrally symmetric $K)$ is proved in the form
\[
\vol K\cdot \vol K^\circ \ge c_n \frac{4^n}{n!}
\]
with some sub-exponential $c_n$ $($that is $c_n^{1/n}\to 1$ as $n\to\infty)$ then Mahler's conjecture holds in the original form with $c_n\equiv 1$.
\end{theorem}

Note that the existing lower bounds \cite{bourgainmilman1987,ku2008} are exponentially worse then the conjectured bound. Hence this theorem does not prove Mahler's conjecture, the conjecture remaining open for $n>3$, see \cite{iriyeh2017,fhmrz2019}.

\begin{proof}
Consider a centrally symmetric convex $K\subset\mathbb R^n$ and an integer $m>0$. Consider its Cartesian power $L_m=\underbrace{K\times \dots \times K}_m$ and write 
\[
\vol L_m \cdot \vol L_m^\circ \ge c_{mn} \frac{4^{nm}}{(nm)!},
\]
by the hypothesis of the theorem.

Note that $\vol L_m = (\vol K)^m$ and $L_m^\circ = \underbrace{K^\circ \oplus_1 \dots \oplus_1 K^\circ}_m$ (the $\ell_1$-sum). By a well-known formula stated in Lemma~\ref{lemma:ellp-sum-vol} below, we have
\[
\vol L_m^\circ = \frac{(n!)^m}{(nm)!} \left( \vol K^\circ \right)^m.
\]
Hence we have 
\[
\left( \vol K \cdot \vol K^\circ \right)^m \ge c_{nm} \frac{4^{nm}}{(n!)^m} \Leftrightarrow \vol K \cdot \vol K^\circ \ge c_{nm}^{1/m} \frac{4^n}{n!}.
\]
Going to the limit as $m\to \infty$, we obtain
\[
\vol K\cdot \vol K^\circ \ge \frac{4^n}{n!}.
\]
\end{proof}

\begin{lemma}
\label{lemma:ellp-sum-vol}
If $K\subset\mathbb R^n$ and $L\subset \mathbb R^m$ are convex bodies having the origin in their interior then the $\ell_p$-sum $K\oplus_p L$ has volume
\[
\vol K\oplus_p L = \frac{(n/p)!(m/p)!}{\left(\frac{n+m}{p}\right)!} \vol K\cdot \vol L. 
\]
\end{lemma}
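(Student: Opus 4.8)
The plan is to compute $\vol(K\oplus_p L)$ directly as a Lebesgue integral over $\mathbb{R}^n\times\mathbb{R}^m$ and to reduce it to a classical Dirichlet integral. Writing $\|\cdot\|_K$ and $\|\cdot\|_L$ for the Minkowski gauges, so that $K=\{x\where\|x\|_K\le 1\}$ and $L=\{y\where\|y\|_L\le 1\}$, the $\ell_p$-sum is by definition
\[
K\oplus_p L=\{(x,y)\in\mathbb{R}^n\times\mathbb{R}^m \where \|x\|_K^p+\|y\|_L^p\le 1\},
\]
and therefore $\vol(K\oplus_p L)=\int_{\mathbb{R}^{n+m}}\mathbf{1}[\|x\|_K^p+\|y\|_L^p\le 1]\,dx\,dy$.

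First I would pass to radial coordinates in each factor separately. The key input is homogeneity: since $\{x\where\|x\|_K\le r\}=rK$ has volume $r^n\vol K$, the distribution of the gauge $\|x\|_K$ under Lebesgue measure on $\mathbb{R}^n$ has density $n r^{n-1}\vol K$ on $[0,\infty)$, and likewise $m v^{m-1}\vol L$ for the $L$-factor. Integrating first over the level sets of the two gauges (a coarea / Fubini step) collapses the integral to
\[
\vol(K\oplus_p L)=nm\,\vol K\,\vol L\int_0^\infty\!\!\int_0^\infty \mathbf{1}[u^p+v^p\le 1]\,u^{n-1}v^{m-1}\,du\,dv.
\]

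Next I would substitute $s=u^p$ and $t=v^p$, so that $u^{n-1}\,du=\frac1p s^{n/p-1}\,ds$ and $v^{m-1}\,dv=\frac1p t^{m/p-1}\,dt$, turning the remaining integral into the two-variable Dirichlet integral
\[
\frac{1}{p^2}\int_{\{s,t\ge 0,\ s+t\le 1\}} s^{n/p-1}t^{m/p-1}\,ds\,dt=\frac{1}{p^2}\,\frac{\Gamma(n/p)\,\Gamma(m/p)}{\Gamma\!\left(\frac{n+m}{p}+1\right)}.
\]
Collecting the constants by means of the identity $\Gamma(x+1)=x\Gamma(x)$ — which converts $\frac{n}{p}\Gamma(n/p)$ into $(n/p)!$, $\frac{m}{p}\Gamma(m/p)$ into $(m/p)!$, and $\Gamma\!\left(\frac{n+m}{p}+1\right)$ into $\left(\frac{n+m}{p}\right)!$ — then yields exactly the claimed formula.

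The computation is otherwise routine, so the one place I expect to need care is the radial reduction: justifying the weight $n r^{n-1}\vol K$ for a general convex body rather than a round ball. I would handle this by observing that $r\mapsto\vol(rK)=r^n\vol K$ is smooth, differentiating to obtain the measure of the infinitesimal shell $\{r\le\|x\|_K\le r+dr\}$, and then applying Fubini to separate the two factors. The hypothesis that $K$ and $L$ contain the origin in their interiors is exactly what guarantees that the gauges are finite, positively homogeneous and bounded on bounded sets, so that all integrals converge and the change of variables is legitimate.
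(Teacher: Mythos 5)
Your proof is correct, but it takes a genuinely different route from the paper. You compute the volume directly: push the Lebesgue measure forward under the two gauges (using $\vol\{x \where \|x\|_K\le r\}=r^n\vol K$ to get the densities $nr^{n-1}\vol K$ and $mv^{m-1}\vol L$), reduce to the two-variable integral over $\{u^p+v^p\le 1\}$, substitute $s=u^p$, $t=v^p$, and finish with the classical Dirichlet integral
\[
\int_{\{s,t\ge 0,\, s+t\le 1\}} s^{a-1}t^{b-1}\,ds\,dt=\frac{\Gamma(a)\Gamma(b)}{\Gamma(a+b+1)} .
\]
The paper instead argues indirectly: it introduces the auxiliary integral $\int_{\mathbb R^d}e^{-\|x\|^p}\,dx=\vol X\cdot(d/p)!$ (proved by a one-line layer-cake computation), applies this identity once in dimension $n+m$ to the norm $(\|x\|_K^p+\|y\|_L^p)^{1/p}$, and once in each factor after splitting the exponential by Fubini; the Gamma factors then appear automatically, with no Beta-type evaluation required. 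The trade-off: your argument is conceptually the most direct and makes the geometric content (radial decomposition with respect to the gauges) explicit, but it quotes the Dirichlet formula as a black box — a formula whose own standard proof is essentially the paper's exponential trick. The paper's argument is slightly slicker and fully self-contained, at the cost of pulling the auxiliary integral out of a hat. Your verifications of the steps that need care (the density of the gauge distribution, the role of the hypothesis that $K$ and $L$ contain the origin in their interiors, and the bookkeeping $\frac{n}{p}\Gamma(n/p)=(n/p)!$, etc.) are all sound, so the proof stands as written.
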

\begin{proof}
Recall the formula for a not necessarily symmetric norm $\|\cdot\|$
\[
\int_{\mathbb R^d} e^{-\|x\|^p}\; dx = \vol X\cdot \left(\frac{d}{p}\right)!,
\] 
where $X\subset \mathbb R^d$ is the unit ball of the norm. This formula follows from considering the volume under the graph of $e^{-\|x\|^p}$ and writing it as an integral over the range $y\in [0,1]$.

Apply the above formula to the norm $(\|x\|_K^p + \|y\|_L^p)^{1/p}$ on $\mathbb R^{n}\times \mathbb R^m$ built from the norms whose unit balls are $K$ and $L$ respectively. The unit ball of this norm is the $\ell_p$ sum $X=K\oplus_p L$. Also, write the integral on the left hand side using Fubini's theorem:
\begin{multline*}
\vol X\cdot \left(\frac{n+m}{p}\right)! = \int_{\mathbb R^{n+m}} e^{-\|(x,y)\|^p}\; dxdy = \\
= \int_{\mathbb R^{n}} e^{-\|x\|_K^p}\; dx\cdot \int_{\mathbb R^{m}} e^{-\|y\|_L^p}\; dy = \vol K\cdot \vol L\cdot \left(\frac{n}{p}\right)!\left(\frac{m}{p}\right)!
\end{multline*}
This is what we need to prove.
\end{proof}

\section{Proof of Theorem \ref{theorem:mahler-polar}}
\label{section:volume-conjecture}

Let us state one simple fact about symplectically self-polar bodies, and then pass to the proof of Theorem \ref{theorem:mahler-polar}.

\begin{lemma}
Every symplectically self-polar body is centrally symmetric.
\end{lemma}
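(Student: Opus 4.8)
The plan is to reduce the statement to a single algebraic identity, the \emph{symplectic bipolar} relation $(X^\omega)^\omega = -X$, valid for every convex body $X$ containing the origin in its interior, and then to substitute the hypothesis $X=X^\omega$ into it. The whole argument rests on the relation $X^\omega = JX^\circ$ recorded in the introduction, combined with the ordinary bipolar theorem for Euclidean polarity and the elementary algebra of the complex structure $J$ (namely $J^2=-\id$ and $J^T=J^{-1}=-J$).

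First I would record how Euclidean polarity transforms under an invertible linear map $A$. Starting from the definition, a point $z$ lies in $(AY)^\circ$ iff $\langle Ay,z\rangle\le 1$ for all $y\in Y$, i.e. iff $\langle y,A^Tz\rangle\le 1$, which says $A^Tz\in Y^\circ$. Hence $(AY)^\circ = (A^{-1})^T Y^\circ$. Specializing to $A=J$ and using $(J^{-1})^T = (-J)^T = -J^T = J$, together with the bipolar theorem $(X^\circ)^\circ = X$ (applicable because $X$, being a convex body with the origin in its interior, has a convex body $X^\circ$ of the same type), I obtain $(JX^\circ)^\circ = JX$.

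Then I would chain the computation, applying $X^\omega = JX^\circ$ twice:
\[
(X^\omega)^\omega = J\,(X^\omega)^\circ = J\,(JX^\circ)^\circ = J\cdot JX = J^2 X = -X.
\]
This establishes the identity $(X^\omega)^\omega = -X$ for every admissible $X$. Finally, under the hypothesis $X=X^\omega$, substituting gives $-X = (X^\omega)^\omega = X^\omega = X$, so $X=-X$, which is exactly central symmetry.

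I expect no genuine obstacle in this argument; it is essentially a one-line consequence of the bipolar identity. The only point demanding care is the bookkeeping of transposes and inverses of $J$ under Euclidean polarity, where one must track the sign precisely so that the crucial factor $J^2 = -\id$ is propagated correctly and the antipodal map appears.
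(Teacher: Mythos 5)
Your proof is correct and follows essentially the same route as the paper: both derive the symplectic bipolar identity $(X^\omega)^\omega = -X$ from $X^\omega = JX^\circ$, the Euclidean bipolar theorem, and the orthogonality relations of $J$, then substitute $X = X^\omega$. Your version merely spells out the transpose bookkeeping $(AY)^\circ = (A^{-1})^T Y^\circ$ that the paper compresses into the phrase ``orthogonality of $J$.''
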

\begin{proof}
The equality $X=X^\omega=JX^\circ$ together with orthogonality of $J$ and the relation $J^2 = -1$ implies
\[
X=(X^\omega)^\omega = J(JX^\circ)^\circ = - (X^\circ)^\circ = - X.
\]
\end{proof}

Assume that Mahler's conjecture holds. Since $X = X^\omega = JX^\circ$, the volumes of $X$ and $X^\circ$ are equal. Applying Mahler's conjecture to $X$ and $X^\circ$, we then obtain with the use of Stirling's formula
\[
\left( \vol X \right)^2 \ge \frac{4^{2n}}{(2n)!}\Rightarrow \vol X \ge c_n \frac{4^n}{\sqrt{(2n)^{2n} e^{-2n}}} = c_n \frac{4^n}{2^n n^n e^{-n}} = c_n \frac{2^n}{n^n e^{-n}} = c'_n \frac{2^n}{n!},
\]
where $c_n$ and $c'_n$ are sub-exponential.

In the other direction, assume that the equality $X=X^\omega$ implies
\[
\vol X \ge c_n \frac{2^n}{n!}
\]
with a sub-exponential $c_n$. Take a convex centrally symmetric $K\subset\mathbb R^n$ and notice that by the well-known Lemma~\ref{lemma:ellp-sum-vol}
\[
\vol K \oplus_2 K^\circ = \vol K\cdot \vol K^\circ \cdot \frac{\left( (n/2)! \right)^2 }{n!}.
\]
Since $X=K\oplus_2 K^\circ$ satisfies $X=X^\omega$, we have 
\[
\vol K\cdot \vol K^\circ \cdot \frac{\left( (n/2)! \right)^2 }{n!} \ge c_n \frac{2^n}{n!}.
\]
Using Stirling's formula we can rewrite
\[
\vol K\cdot \vol K^\circ\ge c_n \frac{n!}{\left( (n/2)! \right)^2 } \frac{2^n}{n!} = c_n \frac{2^n}{\left( (n/2)! \right)^2 } = c'_n \frac{2^n}{(n/2)^n e^{-n}} = c'_n \frac{4^n}{n^n e^{-n}} = c''_n \frac{4^n}{n!}.
\]
This is Mahler's inequality up to a sub-exponential $c''_n$, which implies the precise form of Mahler's inequality by Theorem~\ref{theorem:sub-exponential}.

The last claim of the theorem is proved similarly to the proof of Theorem~\ref{theorem:sub-exponential}. Take $m$ and take $Y = \underbrace{X\oplus_2 \dots \oplus_2 X}_m$. If $X^\omega = X$ then $Y^\omega = Y$, since the $\ell_2$-sum commutes with the polarity and with the $\omega$-duality of centrally symmetric bodies.

By Lemma~\ref{lemma:ellp-sum-vol}, the volumes are related as
\[
\vol Y = \left(\vol X\right)^m \frac{(n!)^m}{(nm)!},
\]
if $n$ is the dimension of $X$. Assuming the weak version
\[
\vol Y \ge c_{nm}\frac{2^{nm}}{(nm)!},
\]
we obtain
\[
\vol X \ge \left( c_{nm} \right)^{1/m} \frac{2^n}{n!}.
\]
Passing to the limit $m\to \infty$, in view of sub-exponentiality of $c_{nm}$, we obtain
\[
\vol X \ge \frac{2^n}{n!}.
\]

\section{Symplectic reduction of self-polar bodies}
\label{section:reduction}

Similar to the results of \cite{karasev2021}, we show that the notion of symplectically self-polar body is compatible with symplectic reduction. By a \emph{linear symplectic reduction of a convex body} we mean a convex body obtained by the following procedure: Take an isotropic linear subspace $V\subset\mathbb R^{2n}$, take its coisotropic $\omega$-orthogonal 
\[
V^{\perp_\omega} = \{y \in \mathbb{R}^{2n}\ |\ \forall x \in V\ \omega(x,y) = 0 \}.
\] 
and let the reduction $Y$ be the projection of the intersection $X\cap V^{\perp_\omega}$ to the quotient $V^{\perp_\omega}/V$ considered as a convex body in the symplectic space $V^{\perp_\omega}/V$.

\begin{lemma}
\label{lemma:self-polar-reduction}
If a convex body $X\subset\mathbb R^{2n}$ satisfies $X=X^\omega$ then any linear symplectic reduction $Y$ of $X$ satisfies $Y=Y^\omega$.
\end{lemma}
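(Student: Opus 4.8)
The plan is to show that linear symplectic reduction \emph{commutes} with symplectic polarity: if $Y$ denotes the reduction of $X$, then $Y^\omega$ (polarity taken with respect to the reduced form) is exactly the reduction of $X^\omega$. Once this is established, the hypothesis $X=X^\omega$ transfers verbatim to $Y=Y^\omega$. Recall the set-up of linear reduction: one fixes a coisotropic subspace $C\subset\mathbb R^{2n}$, so that its symplectic orthogonal $C^\omega$ is isotropic and contained in $C$; the reduced symplectic space is the quotient $R=C/C^\omega$ with the induced form $\bar\omega([x],[y])=\omega(x,y)$ (well defined since $\omega(c,y)=0$ for $c\in C^\omega$, $y\in C$), and the reduction of the body is $Y=\pi(X\cap C)$, where $\pi\colon C\to R$ is the projection.

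First I would write out both candidate descriptions of the reduced polar. A class $[y]$ (with representative $y\in C$) lies in $Y^{\bar\omega}$ precisely when $\omega(x,y)\le 1$ for every $x\in X\cap C$; this condition is independent of the chosen representative, since altering $y$ by an element of $C^\omega$ does not change its pairing with vectors of $C$. On the other hand, the reduction of $X^\omega$ is
\[
\pi(X^\omega\cap C)=\{\,[y] : \exists\, y\in C \text{ with } \omega(x,y)\le 1 \text{ for all } x\in X\,\}.
\]
The inclusion $\pi(X^\omega\cap C)\subseteq Y^{\bar\omega}$ is immediate from monotonicity, because $X\cap C\subseteq X$: a bound valid against all of $X$ is in particular valid against the section.

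The substance is the reverse inclusion $Y^{\bar\omega}\subseteq\pi(X^\omega\cap C)$, and here I would invoke the classical section--projection duality of convex geometry, namely that the polar of a section equals the projection (restriction) of the polar, $(X\cap C)^\circ=\iota^*(X^\circ)$ for $\iota\colon C\hookrightarrow\mathbb R^{2n}$. Concretely, if $[y]\in Y^{\bar\omega}$, the linear functional $\omega(\cdot,y)$ is bounded by $1$ on the section $X\cap C$, so by this duality (a Hahn--Banach extension) it extends to a functional $\omega(\cdot,z)$ bounded by $1$ on all of $X$; nondegeneracy of $\omega$ produces the vector $z$, and then $z\in X^\omega$. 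Since the extension agrees with $\omega(\cdot,y)$ on $C$, the difference satisfies $\omega(\cdot,z-y)\equiv 0$ on $C$, i.e. $z-y\in C^\omega$. The \emph{coisotropy} of $C$ is exactly what is needed now: because $C^\omega\subseteq C$ and $y\in C$, we get $z\in C$, hence $z\in X^\omega\cap C$ with $\pi(z)=[y]$, proving $[y]\in\pi(X^\omega\cap C)$.

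Combining the two inclusions gives $Y^{\bar\omega}=\pi(X^\omega\cap C)$, and substituting $X^\omega=X$ yields $Y^{\bar\omega}=\pi(X\cap C)=Y$, as desired. The main obstacle is the reverse inclusion, i.e. the extension step: one must upgrade a bound holding only on the section $X\cap C$ to a bound on the whole body $X$ without leaving the correct coset. This is precisely where coisotropy enters, guaranteeing that the extending vector can be chosen inside $C$ so that it descends to the same class in $R$. Along the way I would also check that $0$ lies in the relative interior of $X\cap C$, so that all the polars and the reduction $Y$ are genuine convex bodies with the origin in their interiors.
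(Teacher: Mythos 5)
Your proposal is correct and follows essentially the same route as the paper: both proofs establish that linear symplectic reduction commutes with symplectic polarity, using the section--projection duality for polars, with coisotropy (in the paper's one-step version, the fact that $v\in\langle v\rangle^{\perp_\omega}$) as the ingredient that makes the two operations compatible. The only difference is presentational: you work with a general coisotropic subspace $C$ and the quotient $C/C^\omega$ in one shot, and you spell out the Hahn--Banach extension argument that the paper's terse one-step section-then-projection proof leaves implicit.
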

\begin{proof}
From the symplectically polar body viewpoint, the section by $V^{\perp_\omega}$ corresponds to the projection along $V$ and the projection along $V$ corresponds to the section by $V^{\perp_\omega}$. Since $V\subseteq V^{\perp_\omega}$, the projection and the section commute, hence the procedure of obtaining the reduction $Y$ from $X$ is the same as the procedure of obtaining $Y^\omega$ from $X^\omega$. Hence $Y=Y^\omega$ whenever $X=X^\omega$.
\end{proof}

\begin{conjecture}
\label{conjecture:self-polar-reduction}
If $X\subset \mathbb R^{2n}$ is symplectically self-polar convex body and $\vol X\ge 2^n/n!$ then any of its linear symplectic reductions $Y\subset\mathbb R^{2n-2}$ has volume $\vol Y\ge 2^{n-1}/(n-1)!$.
\end{conjecture}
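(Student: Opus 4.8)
The plan is to first reduce the statement to the main conjecture in lower dimension, and then to spell out what an \emph{unconditional} argument would have to overcome. By Lemma~\ref{lemma:self-polar-reduction} the reduction $Y\subset\mathbb R^{2n-2}=\mathbb R^{2(n-1)}$ again satisfies $Y=Y^\omega$. Hence the target inequality $\vol Y\ge 2^{n-1}/(n-1)!$ is nothing but Conjecture~\ref{conjecture:polar-volume} applied in dimension $2(n-1)$. In particular the statement is an immediate consequence of the main conjecture one dimension down, so a genuine proof must either assume that case (making the present statement a consistency check useful inside an inductive scheme) or proceed by a direct volume comparison between $X$ and $Y$ that does not presuppose it. The rest of the plan concerns the latter, genuinely difficult, route.

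First I would fix Darboux coordinates $(q_1,p_1,\dots,q_n,p_n)$ with $\omega=\sum_i dq_i\wedge dp_i$ and take the reduction vector to be $v=e_{q_1}$, so that $\langle v\rangle^{\perp_\omega}=\{p_1=0\}$ and projection along $v$ forgets the coordinate $q_1$. Writing $z=(q_2,p_2,\dots,q_n,p_n)$ and letting $X_z=\{(q_1,p_1)\mid(q_1,p_1,z)\in X\}$ be the planar fiber, Fubini gives $\vol X=\int\area(X_z)\,dz$, while $Y=\{z\mid X_z\cap\{p_1=0\}\neq\varnothing\}$ is exactly the set of fibers meeting the $q_1$-axis. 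The idea is to combine this with the section--projection duality of polarity, namely that $(X\cap H)^\circ$ computed inside a subspace $H$ equals the orthogonal projection of $X^\circ$ onto $H$, together with the self-polarity $X=JX^\circ$, in order to turn the hypothesis $\vol X\ge 2^n/n!$ into a lower bound on the base measure $\vol Y=\int_Y dz$. A Rogers--Shephard or Pr\'ekopa--Leindler type estimate controlling $\area(X_z)$ through the $q_1$-extent of the fibers is the natural analytic engine at this step.

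The main obstacle is that no soft, dimension-uniform volume comparison can possibly suffice. The ratio of the two target minima is $\frac{2^n/n!}{2^{n-1}/(n-1)!}=\frac{2}{n}$, whereas for the unit ball $X=B^{2n}$, which is self-polar with reduction $Y=B^{2(n-1)}$, one has $\vol X=\pi^n/n!$ and $\vol Y=\pi^{n-1}/(n-1)!$, so that $\vol X=\frac{\pi}{n}\vol Y$. Since $\pi>2$, an inequality of the shape $\vol X\le\frac{2}{n}\vol Y$ — which is what a direct multiplicative argument would require — \emph{fails} at the ball, the volume maximizer by the Blaschke--Santal\'o inequality. Consequently the proof cannot be a pointwise or Fubini comparison of fiber areas alone; it must genuinely use self-polarity to exclude the concentration of volume into a small set of fibers, precisely the phenomenon that the ball exhibits and the conjectural minimizers do not.

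For this reason I expect the constraint $X=JX^\circ$, rather than any generic convexity inequality, to carry the full weight of the argument, and I expect the unconditional statement to be of essentially the same depth as Conjecture~\ref{conjecture:polar-volume}, equivalently Mahler's conjecture, itself. A realistic intermediate target is therefore to establish the reduction inequality only up to a subexponential factor $c_n$, in the spirit of Theorems~\ref{theorem:subexponential} and~\ref{theorem:mahler-polar}; since $Y$ is itself symplectically self-polar, such a subexponential bound is a subexponential instance of Conjecture~\ref{conjecture:polar-volume} in dimension $2(n-1)$, so that the sharpening to the clean constant would plug directly into the framework already developed in those theorems.
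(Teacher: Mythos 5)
The statement you were asked to prove is a \emph{conjecture}: the paper contains no proof of it, and no unconditional proof can currently exist, because the paper shows (Theorem~\ref{theorem:mahker-self-polar-reduction}) that this conjecture, taken over all $n$, is \emph{equivalent} to Mahler's conjecture for centrally symmetric bodies, which remains open for $n>3$. Your proposal correctly diagnoses this situation, so there is no gap to repair, only a comparison to make. Your conditional derivation --- Lemma~\ref{lemma:self-polar-reduction} gives $Y=Y^\omega$, hence the conclusion $\vol Y\ge 2^{n-1}/(n-1)!$ is precisely Conjecture~\ref{conjecture:polar-volume} in dimension $2(n-1)$, with the hypothesis $\vol X\ge 2^n/n!$ not even needed --- is exactly the direction of the equivalence that the paper calls ``clear'' in the proof of Theorem~\ref{theorem:mahker-self-polar-reduction}: Mahler's conjecture implies, via Theorem~\ref{theorem:mahler-polar}, the bound $\vol X\ge 2^n/n!$ for every symplectically self-polar body in every dimension, and applying this to $Y$ yields the statement. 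Your assessment that the unconditional statement carries the full depth of Mahler's conjecture is confirmed, and in fact sharpened, by the converse direction of Theorem~\ref{theorem:mahker-self-polar-reduction}: assuming the conjectured reduction step, the paper deduces Mahler's conjecture by approximating $K\oplus_2 K^\circ$ by linear symplectic reductions of $Q^N\oplus_2 C^N$, which satisfies $\vol Q^N\oplus_2 C^N\ge 2^N/N!$ because $\binom{N}{N/2}\le 2^N$, and then applying the reduction inequality repeatedly along the chain of reductions.

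Your obstruction analysis is also sound and worth keeping. A dimension-uniform inequality of the form $\vol Y\ge \frac{n}{2}\vol X$ would indeed prove the statement directly, and it indeed fails at the unit ball, where $\vol Y=\frac{n}{\pi}\vol X$; this correctly rules out any soft Fubini or fiber-area comparison that transfers the hypothesis on $\vol X$ to the conclusion on $\vol Y$ without using self-polarity in an essential way. This observation is fully consistent with the equivalence proved in the paper: any such soft argument, applied along the reduction chain from $Q^N\oplus_2 C^N$, would amount to a proof of Mahler's conjecture.
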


\begin{theorem}
\label{theorem:mahker-self-polar-reduction}
Conjecture \ref{conjecture:self-polar-reduction} is equivalent to Mahler's conjecture for centrally symmetric bodies.
\end{theorem}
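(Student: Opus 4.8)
The plan is to invoke Theorem~\ref{theorem:mahler-polar}, which makes Mahler's conjecture equivalent to Conjecture~\ref{conjecture:polar-volume}, and to prove instead that Conjecture~\ref{conjecture:self-polar-reduction} is equivalent to Conjecture~\ref{conjecture:polar-volume}. One direction is immediate: if Conjecture~\ref{conjecture:polar-volume} holds in every dimension, then given a self-polar $X$ with $\vol X\ge 2^n/n!$, its reduction $Y$ is again self-polar by Lemma~\ref{lemma:self-polar-reduction}, so the bound $\vol Y\ge 2^{n-1}/(n-1)!$ follows from Conjecture~\ref{conjecture:polar-volume} applied in dimension $2(n-1)$. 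This yields Conjecture~\ref{conjecture:self-polar-reduction} (in fact without even using the hypothesis on $\vol X$).

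For the converse, the main content, I would assume Conjecture~\ref{conjecture:self-polar-reduction} and deduce Conjecture~\ref{conjecture:polar-volume}. Fix a self-polar $X\subset\mathbb R^{2n}$; the goal is $\vol X\ge 2^n/n!$. The idea is to build a self-polar body in a much higher dimension whose volume comfortably exceeds the conjectured bound and which reduces back to $X$, then to let the reduction conjecture propagate the bound downward. Concretely, set $W_k=X\oplus_2 B^{2k}$, the $\ell_2$-sum of $X$ with the round unit ball $B^{2k}\subset\mathbb R^{2k}$. Since the Euclidean ball is symplectically self-polar and the $\ell_2$-sum commutes with both Euclidean polarity and the complex structure $J$, the body $W_k$ is self-polar. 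Reducing $W_k$ along a vector lying in the ball factor peels off one complex dimension of the ball, so $W_k$ reduces through the chain $W_k\to W_{k-1}\to\dots\to W_0=X$, each step being a linear symplectic reduction to which Conjecture~\ref{conjecture:self-polar-reduction} applies.

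It then remains to seed the top of this chain. By Lemma~\ref{lemma:ellp-sum-vol} and $\vol B^{2k}=\pi^k/k!$ one computes $\vol W_k=\frac{n!\,\pi^k}{(n+k)!}\vol X$, so the hypothesis $\vol W_k\ge 2^{n+k}/(n+k)!$ of the reduction conjecture is equivalent to $\vol X\ge \frac{2^n}{n!}\left(\frac{2}{\pi}\right)^k$. Because $2/\pi<1$, the right-hand side tends to $0$, so for $k$ large enough (depending on the fixed body $X$, which has $\vol X>0$) this inequality holds automatically. Applying Conjecture~\ref{conjecture:self-polar-reduction} repeatedly down the chain $W_k\to\dots\to W_0=X$ then yields $\vol W_0=\vol X\ge 2^n/n!$, which is Conjecture~\ref{conjecture:polar-volume} and hence, by Theorem~\ref{theorem:mahler-polar}, Mahler's conjecture. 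The mechanism is that the ball over-satisfies the conjectured self-polar bound by a factor $(\pi/2)^k\to\infty$, and this surplus is exactly what makes the descent self-starting, in the same spirit as the sub-exponential suppression of Theorem~\ref{theorem:subexponential}.

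I expect the main obstacle to be the careful verification that symplectic reduction along a ball-factor vector really returns $X\oplus_2 B^{2k-2}$ — that is, that reduction commutes with the $\ell_2$-sum when one factor is reduced — including the bookkeeping that the section by $\langle v\rangle^{\perp_\omega}$ followed by the projection along $v$ minimizes the ball-norm over the fiber and so reproduces the Euclidean norm on the quotient. Self-polarity of each $W_j$ is then automatic, either from the same commutation property or directly from Lemma~\ref{lemma:self-polar-reduction}, so the only genuinely geometric step is this reduction computation; everything else is the volume identity of Lemma~\ref{lemma:ellp-sum-vol} together with the elementary limit $k\to\infty$.
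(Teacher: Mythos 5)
Your proposal is correct, but for the nontrivial direction it takes a genuinely different route from the paper. The easy direction (Mahler $\Rightarrow$ Conjecture~\ref{conjecture:self-polar-reduction}) is identical: both you and the paper note that, via Theorem~\ref{theorem:mahler-polar}, Mahler gives the bound $\vol Y\ge 2^{n-1}/(n-1)!$ for the reduction directly, without using the hypothesis on $\vol X$. For the converse, the paper does \emph{not} pad an arbitrary self-polar body with a ball; instead it works with the bodies $K\oplus_2 K^\circ$, using that Mahler is equivalent (via Lemma~\ref{lemma:ellp-sum-vol} and Theorem~\ref{theorem:subexponential}) to $\vol K\oplus_2 K^\circ\ge 2^n/n!$, approximates an arbitrary centrally symmetric $K$ by sections of high-dimensional cubes $Q^N$, realizes $K\oplus_2 K^\circ$ as (a limit of) linear symplectic reductions of $Q^N\oplus_2 C^N$, verifies $\vol Q^N\oplus_2 C^N=\frac{((N/2)!)^2}{N!}\cdot\frac{4^N}{N!}\ge\frac{2^N}{N!}$ since $\binom{N}{N/2}\le 2^N$, and then lets Conjecture~\ref{conjecture:self-polar-reduction} propagate the bound down the reduction chain. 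Your version replaces the cube--crosspolytope seed by $W_k=X\oplus_2 B^{2k}$ for the \emph{given} self-polar $X$, uses the surplus factor $(\pi/2)^k\to\infty$ of the ball to make the starting hypothesis $\vol W_k\ge 2^{n+k}/(n+k)!$ automatic for large $k$, and descends back to $W_0=X$; this proves the cleaner implication that Conjecture~\ref{conjecture:self-polar-reduction} implies Conjecture~\ref{conjecture:polar-volume} exactly, with no approximation by cube sections and no limiting argument, after which Theorem~\ref{theorem:mahler-polar} yields Mahler. The one computation you flag as the obstacle does go through: taking $v$ a unit vector in the ball factor, $\langle v\rangle^{\perp_\omega}$ meets the ball factor in the hyperplane $\omega(v,\cdot)=0$ of that factor (which contains $v$), and projecting along $v$ minimizes the $\ell_2$-sum gauge over the fiber at the zero $v$-coordinate, so the reduction of $X\oplus_2 B^{2j}$ is exactly $X\oplus_2 B^{2j-2}$, which is again self-polar (either by this product structure or by Lemma~\ref{lemma:self-polar-reduction}), so the induction is self-propagating. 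What each approach buys: yours is exact and logically tighter (reduction conjecture $\Rightarrow$ polar-volume conjecture as stated, dimension by dimension down the chain), while the paper's makes explicit the connection to the cube/crosspolytope reduction picture of \cite{karasev2021} that motivated the conjecture in the first place; both ultimately lean on Theorem~\ref{theorem:mahler-polar} and hence on the sub-exponential suppression of Theorem~\ref{theorem:subexponential}.
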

\begin{proof}
Theorem \ref{theorem:mahler-polar} shows that Mahler's conjecture for centrally symmetric convex bodies is equivalent to the estimate
\[
\vol X \ge \frac{2^n}{n!}
\]
for every symplectically self-polar body $X\subset\mathbb R^{2n}$, hence one direction of the implication is clear. The argument in Section~\ref{section:volume-conjecture} shows in particular that Mahler's conjecture for centrally symmetric convex bodies is also equivalent to the estimate 
\[
\vol K\oplus_2 K^\circ \ge \frac{2^n}{n!}
\]
for all $n$ and all centrally symmetric convex bodies $K\subset\mathbb R^n$. 

Every $K$ in the above inequality can be approximated by $n$-dimensional sections of high-dimensional cubes $Q^N = [-1,1]^N$, hence it is sufficient to prove this estimate for such sections. Denote the polar to the cube, the crosspolytope, by $C^N$. When $K$ is approximated by a section of $Q^N$ with the linear subspace $L\subset\mathbb R^N$ then $K^\circ$ is approximated by the projection of $C^N$ along $L^\perp$. In \cite{karasev2021} it was used that the Lagrangian product $K\times K^\circ$ is then approximated by the linear symplectic reduction of $Q^N\times C^N$. Now we note that $K\oplus_2 K^\circ$ is also approximated by the linear symplectic reduction of $Q^N\oplus_2 C^N$.

For $Q^N$ and $C^N$ in place of $K$ and $K^\circ$ we have
\[
\vol Q^N\oplus_2 C^N = \frac{(N/2)!(N/2)!}{N!}\frac{4^N}{N!} \ge \frac{2^N}{N!},
\]
since $\frac{N!}{(N/2)!(N/2)!} = \binom{N}{N/2} \le 2^N$. Lemma~\ref{lemma:self-polar-reduction} and the assumption that Conjecture~\ref{conjecture:self-polar-reduction} is valid then allow to make a sequence of linear reduction steps and arrive at 
\[
\vol K\oplus_2 K^\circ \ge \frac{2^n}{n!}
\]
in the end.
\end{proof}

\section{Capacities of self-polar bodies}
\label{section:capacity}

We are going to check how Conjecture~\ref{conjecture:polar-volume},
\[
\vol X \ge \frac{2^n}{n!}, \quad X=X^\omega\subset \mathbb R^{2n},
\]
relates to Viterbo's conjectured inequality
\[
\vol X \ge \frac{c_{EHZ}(X)^n}{n!}.
\]
This amounts to checking that $c_{EHZ}(X)\ge 2$ for symplectically self-polar bodies. In fact, we can establish a stronger inequality for a wider class of convex bodies.

\begin{theorem}
\label{theorem:cehz}
Let $X \subset \mathbb{R}^{2n}$ be a centrally symmetric convex body such that $X^{\omega} \subseteq X$. Then
\[
c_{EHZ}(X) \geq 2 + \frac{1}{n}.
\]
\end{theorem}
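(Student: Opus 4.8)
The plan is to work with the description of $c_{EHZ}$ as the minimal action of a closed characteristic on $\partial X$ and to feed the hypothesis into this action through its analytic reformulation. First I would rewrite $X^\omega\subseteq X$ in terms of the gauge $\|\cdot\|_X$ and the support function $h_X=\|\cdot\|_{X^\circ}$: since $X^\omega=JX^\circ$, the inclusion $JX^\circ\subseteq X$ is equivalent to
\[
\|Jy\|_X\le h_X(y)\qquad\text{for all }y\in\mathbb R^{2n}.
\]
Assuming $X$ smooth and strictly convex (the general case following by approximation and continuity of $c_{EHZ}$), let $\gamma:[0,T]\to\partial X$ be a minimal closed characteristic, parametrized by the Hamiltonian flow of $\tfrac12\|\cdot\|_X^2$ so that $\dot\gamma=J\nu$, where $\nu(t)$ is the outer normal at $\gamma(t)$ normalized by $\langle\gamma(t),\nu(t)\rangle=h_X(\nu(t))$. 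Then with this orientation
\[
c_{EHZ}(X)=\tfrac12\int_0^T\omega(\gamma,\dot\gamma)\,dt=\tfrac12\int_0^T h_X(\nu)\,dt .
\]

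The core step is then a single pointwise application of the hypothesis. Since $\dot\gamma=J\nu$, the reformulated inclusion gives $\|\dot\gamma\|_X=\|J\nu\|_X\le h_X(\nu)$, hence
\[
c_{EHZ}(X)=\tfrac12\int_0^T h_X(\nu)\,dt\ge\tfrac12\int_0^T\|\dot\gamma\|_X\,dt=\tfrac12\,\ell_X(\gamma),
\]
where $\ell_X(\gamma)$ is the length of $\gamma$ measured in the norm $\|\cdot\|_X$. (For the ball both inequalities become equalities and reproduce $c_{EHZ}=\pi$, $\ell_X=2\pi$, a useful sanity check, and equality of the hypothesis is harmless since the ball is not extremal.) To turn this into a capacity bound I would lower-bound $\ell_X(\gamma)$. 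Using central symmetry of $X$ one may take the minimal characteristic to be centrally symmetric, $\gamma(t+T/2)=-\gamma(t)$; then $\gamma$ contains antipodal pairs $p,-p\in\partial X$ with $\|p-(-p)\|_X=2$, and since a closed curve has $\|\cdot\|_X$-length at least twice its $\|\cdot\|_X$-diameter we obtain $\ell_X(\gamma)\ge4$ and therefore the baseline estimate $c_{EHZ}(X)\ge2$.

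The hard part is extracting the extra $1/n$, i.e. sharpening the length estimate to $\ell_X(\gamma)\ge4+2/n$. The baseline bound is wasteful because it treats the two arcs joining $p$ to $-p$ as if they were geodesic chords, whereas a closed characteristic must also wind in the symplectic sense — its total symplectic area equals $c_{EHZ}(X)>0$, spread across the $n$ symplectic planes — and this forced winding costs additional length. I expect this to be the main obstacle, and I would approach it in one of two ways. The direct route is a quantitative, norm-sensitive isoperimetric inequality on $\partial X$ bounding $\ell_X(\gamma)$ below in terms of the symplectic winding and the dimension $2n$, with the $1/n$ emerging from distributing a unit of winding over $n$ planes. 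The alternative is to pass to Clarke's dual action principle, expressing $c_{EHZ}(X)$ through the minimum of $\tfrac12\int_0^1 h_X(-J\dot z)^2\,dt$ over loops of fixed action and estimating it by a Fourier decomposition: the operator $-J\,\frac{d}{dt}$ has eigenvalues $2\pi k$ of multiplicity $2n$, and the mismatch between the $X$-norm quadratic form and the Euclidean one — now controlled by $\|Jy\|_X\le h_X(y)$ in an averaged, quadratic form rather than pointwise — should produce the correction of order $1/n$. The delicate point in either route is that $\|\cdot\|_X$ is not Euclidean, so the symplectic and metric structures cannot be diagonalized simultaneously; the hypothesis must be exploited globally rather than only along the orbit, and one must ensure that the centrally symmetric minimizer is admissible so that the sharp constant is actually attained in the limit.
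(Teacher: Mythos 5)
Your reduction is correct as far as it goes, and it is in fact the same skeleton as the paper's own proof (which follows \cite{gluskin2015} and the appendix reproducing \cite{akopyan2018capacity}): the hypothesis $X^\omega\subseteq X$ is equivalent to the pointwise bound $g_X(Jy)\le h_X(y)$; a minimal closed characteristic can be parametrized by $\dot\gamma=J\nabla g_X(\gamma)$ with action $T/2$; and the speed estimate $g_X(\dot\gamma)=g_X(J\nabla g_X(\gamma))\le h_X(\nabla g_X(\gamma))=1$ converts the action into a bound by half the $\|\cdot\|_X$-length, giving $c_{EHZ}(X)\ge\tfrac12\,\ell_X(\gamma)$. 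Your baseline step $\ell_X(\gamma)\ge 4$ for a centrally symmetric closed curve, hence $c_{EHZ}(X)\ge 2$, is also fine. But the proof of the actual statement stops there: the sharpening $\ell_X(\gamma)\ge 4+2/n$, which is where the entire content of the theorem sits, is left as a speculation with two suggested routes, neither of which is carried out.

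The gap is concrete, and it has a name: the needed estimate is Schaffer's theorem on the girth of normed spaces (Theorems 13E and 13F in \cite{schaffer1976}, quoted as Lemma~\ref{lemma:schaffer} in the paper): \emph{any} centrally symmetric closed curve on the boundary of a centrally symmetric convex body $X\subset\mathbb R^d$ has $\|\cdot\|_X$-length at least $4+4/d$, which for $d=2n$ gives exactly $4+2/n$. Note that this is a purely metric, convex-geometric fact about arbitrary centrally symmetric closed curves on $\partial X$; it has nothing to do with the symplectic structure. For this reason both of your proposed routes point in the wrong direction: the ``forced symplectic winding'' intuition cannot be the mechanism, since the bound holds for curves subject to no symplectic constraint whatsoever, and the Clarke-duality/Fourier route would have to produce a sharp dimension-dependent constant that such spectral arguments are not known to yield. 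A secondary gap: your claim that the minimal characteristic may be taken centrally symmetric is itself a nontrivial lemma (Lemma~\ref{lemma:symmetry} in the paper), proved via Clarke's dual variational problem by splitting the minimizer into two arcs of equal length and replacing it by an arc together with its reflection; you assert it without justification. With these two inputs supplied --- the symmetrization of the minimizer and Schaffer's girth bound --- your argument closes and coincides with the paper's.
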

\begin{proof}
In \cite{akopyan2018capacity} (see Theorem~\ref{theorem:ehz-estimate} in Appendix~\ref{section:appendix}) the following inequality for centrally symmetric convex bodies $X\subset\mathbb R^{2n}$ was established 
\[
c_{EHZ}(X) \ge \left(2 + \frac{1}{n}\right)c_J(X).
\]
The value on the right-hand side is a linear symplectic invariant defined in our current terms as
\[
c_J(X)^{-1} = \max \{|\omega(x,y)|\ |\ x,y\in X^\omega\}.
\]
In \cite{gluskin2015} this invariant was denoted $\|J\|_{K^\circ\to K}^{-1}$, while in \cite{akopyan2018capacity} the above definition and notation $c_J$ was used with $X^\circ$ in place of $X^\omega$. The latter does not matter, since the Euclidean polar and the symplectic polar only differ by the complex rotation $J$, which preserves $\omega$ and does not affect the right hand side.

Evidently, since $X^{\omega} \subseteq X$ then $x\in X^\omega$ and $y\in X$ imply by definition of $X^\omega$ that $|\omega(x,y)|\le 1$ and therefore $c_J(X)\ge 1$. This proves the required estimate.
\end{proof}

In the separate paper \cite[Theorem~1.2]{berezovik2023} it is shown that the estimate of Theorem~\ref{theorem:cehz} is attained on carefully constructed polytopes for any $n$.

Note that in terms of symplectic polarity one may also define $c_J(X)$ for any centrally symmetric convex body $X$ as follows
\[
c_J(X) = \max\{a^{-2}\ |\ (aX)^\omega \subseteq aX\}.
\]
This follows from the observations that both values are $2$-homogeneous in $X$ and $X^\omega\subseteq X$ if and only if $c_J(X)\ge 1$.

Theorem~\ref{theorem:mahler-polar} and Theorem~\ref{theorem:cehz} with the assumption of Viterbo's conjecture hint that the inequality 
\[
\vol X \ge \frac{2^n}{n!}
\]
for $X=X^\omega\subset\mathbb R^{2n}$ need not be tight in any dimension. In particular, we have the following tight bound in dimension 2, attained at the hexagon $\conv\{\pm(0,1), \pm(1,0),\pm(1,1)\}$ and its $\mathrm{SL}(2,\mathbb R)$ images.

\begin{corollary}
\label{corollary:volr2}
If $X\subset \mathbb{R}^2$ is a centrally symmetric convex body and $X^{\omega} \subseteq X$ then $\vol X \geq 3$.
\end{corollary}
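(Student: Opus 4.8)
The plan is to read the result off from Theorem~\ref{theorem:cehz} in the special case $n=1$, combined with the classical fact that in the plane the Ekeland--Hofer--Zehnder capacity of a convex body coincides with its area.

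First I would apply Theorem~\ref{theorem:cehz} directly. Here $X\subset\mathbb R^2=\mathbb R^{2n}$ with $n=1$, it is centrally symmetric and convex, and the hypothesis $X^\omega\subseteq X$ is exactly the assumption of that theorem. Thus
\[
c_{EHZ}(X)\ge 2+\frac1n=3.
\]

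Next I would convert this capacity bound into a volume bound. In dimension two every normalized symplectic capacity of a convex body equals its area: for $c_{EHZ}$ this holds because the only closed characteristic on $\partial X$ (up to iteration) is the boundary curve itself, whose action $\oint_{\partial X}\frac{1}{2}(x\,dy-y\,dx)$ equals the enclosed area by Stokes' theorem, with a routine approximation argument covering non-smooth boundaries. In other words, Viterbo's inequality $\vol X\ge c_{EHZ}(X)^n/n!$ is in fact an equality when $n=1$. Hence $\vol X=\area(X)=c_{EHZ}(X)\ge 3$, which is the claim.

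Since both ingredients are in hand, I do not expect a serious obstacle; the only step deserving explicit mention is the identity $c_{EHZ}=\area$ in dimension two, as this is exactly what transports the lower bound on the capacity to a lower bound on the volume. To close, I would record that the bound is sharp: for the hexagon $\conv\{\pm(0,1),\pm(1,0),\pm(1,1)\}$ the shoelace formula gives area exactly $3$, and a short computation confirms $X^\omega=X$, so equality is attained.
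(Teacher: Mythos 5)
Your proof is correct and matches the paper's intended derivation: the corollary is stated as an immediate consequence of Theorem~\ref{theorem:cehz} with $n=1$, transported to a volume bound via the standard identity $c_{EHZ}(X)=\area(X)$ for planar convex bodies. Your explicit verification of sharpness at the hexagon also agrees with the paper's remark that equality is attained there.
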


Note that Theorem~\ref{theorem:cehz} and Viterbo's conjecture imply the following stronger version of Conjecture~\ref{conjecture:polar-volume}.
\begin{equation}
\label{equation:polar-volume-s}
\vol X \ge \frac{\left(2 + 1/n\right)^n}{n!},\quad \text{when}\quad X^\omega\subseteq X\subset\mathbb R^{2n}.
\end{equation}
This does not contradict Theorem~\ref{theorem:mahler-polar} because the additional multiplier $\left(1+\frac{1}{2n}\right)^n$ is bounded by $e^{1/2}$. So far we only know that this bound \eqref{equation:polar-volume-s} is sharp in the two-dimensional case $n=1$. In higher dimensions one may try to compare this to the one obtained by the symplectically self-polar $\ell_2$-sum $Q^n\oplus_2 C^n$, where $Q^n$ is the cube and $C^n$ is its polar crosspolytope. Its volume can be approximated with Stirling's formula as
\[
\vol Q^n\oplus_2 C^n = \frac{\left(\left(n/2\right)!\right)^2 4^n}{(n!)^2} \sim \frac{n^n e^{-n} 2^n \pi n}{n^n e^{-n} \sqrt{2\pi n} n!} = \sqrt{\frac{\pi n}{2}}\cdot \frac{2^n}{n!}
\]
for $n\to \infty$, which is noticeably larger than \eqref{equation:polar-volume-s}. As one may check by direct calculation, in low dimensions $\vol Q^n\oplus_2 C^n$ is also larger than \eqref{equation:polar-volume-s} by certain amount.

The symplectic reduction of symplectically self-polar bodies allows one to prove the sharp upper bound (attained on balls) on their capacity.

\begin{theorem}
\label{theorem:capacity-bs}
For any centrally symmetric convex body $X\subset \mathbb{R}^{2n}$ one has
\[
c_{EHZ}(X)\cdot c_{EHZ}(X^\omega) \le \pi^2.
\]
In particular, when $X\subset \mathbb{R}^{2n}$ is a symplectically self polar convex body then $c_{EHZ}(X)\le \pi$.
\end{theorem}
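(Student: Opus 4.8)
The plan is to first transfer the statement to the Euclidean polar. Since $J$ is a linear symplectomorphism and $X^\omega = JX^\circ$, capacities are unchanged, $c_{EHZ}(X^\omega)=c_{EHZ}(X^\circ)$, and the asserted inequality is equivalent to
\[
c_{EHZ}(X)\cdot c_{EHZ}(X^\circ)\le \pi^2 .
\]
The ``in particular'' clause is then immediate, for if $X=X^\omega$ the two factors coincide and $c_{EHZ}(X)^2\le\pi^2$. I would prove the displayed inequality by induction on $n$, descending by two dimensions at a time via linear symplectic reduction.

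For the base case $n=1$, a centrally symmetric convex $X\subset\mathbb R^2$ carries a single closed characteristic, namely its boundary curve, whose action equals $\area X$; thus $c_{EHZ}(X)=\area X$ and likewise $c_{EHZ}(X^\omega)=\area X^\omega=\area X^\circ$. The product $\area X\cdot\area X^\circ\le\pi^2$ is precisely the planar Blaschke--Santal\'o inequality for centrally symmetric bodies, with equality for ellipses, matching the extremal ball.

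For the inductive step, fix any vector $v$ and let $Y\subset\mathbb R^{2n-2}$ be the linear symplectic reduction of $X$ along $v$, that is, the section by $\langle v\rangle^{\perp_\omega}$ followed by the projection along $v$; it is again a centrally symmetric convex body. The computation in the proof of Lemma~\ref{lemma:self-polar-reduction} shows that under $\omega$-polarity the section and the projection exchange roles and commute (since $v\in\langle v\rangle^{\perp_\omega}$), so the reduction of $X^\omega$ along the same $v$ is exactly $Y^\omega$. The engine of the induction is the monotonicity
\[
c_{EHZ}(Y)\ge c_{EHZ}(X),
\]
i.e.\ that linear symplectic reduction does not decrease the capacity. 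Applying it to both $X$ and $X^\omega$ with the same $v$ and invoking the induction hypothesis for $Y$ yields $c_{EHZ}(X)\cdot c_{EHZ}(X^\omega)\le c_{EHZ}(Y)\cdot c_{EHZ}(Y^\omega)\le\pi^2$.

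The main obstacle is this monotonicity. I would derive it from the correspondence of closed characteristics under reduction: the coisotropic hyperplane $\langle v\rangle^{\perp_\omega}$ is foliated by characteristic lines parallel to $v$, and the closed characteristics of $\partial Y$ correspond, with equal action, to precisely those closed characteristics of $\partial X$ lying in reduced position inside $\langle v\rangle^{\perp_\omega}$. The characteristics of $Y$ therefore form an action-preserving subfamily of those of $X$, so the minimal action can only grow and $c_{EHZ}(Y)\ge c_{EHZ}(X)$. The delicate point is to make this correspondence rigorous for a general, possibly nonsmooth, convex body; I would phrase it through the Clarke dual action functional, where passing to the reduction restricts the class of admissible loops to those contained in the slice $\langle v\rangle^{\perp_\omega}$ and then projected, so that the reduced minimum is taken over a smaller family and is no smaller. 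The ellipsoid case, in which reduction simply deletes one symplectic plane and the capacity is $\pi$ times the smallest squared semi-axis, already exhibits the mechanism and confirms that $\pi^2$ is attained only at the ball.
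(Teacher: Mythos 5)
Your route is, in outline, exactly the paper's: reduce to a planar body, use the argument of Lemma~\ref{lemma:self-polar-reduction} to see that linear symplectic reduction commutes with $\omega$-polarity, invoke monotonicity of $c_{EHZ}$ under reduction, identify capacity with area in dimension two, and finish with the planar Blaschke--Santal\'o inequality. (Your two-dimensions-at-a-time induction versus the paper's single reduction to a planar body, and your preliminary passage to $X^\circ$, are immaterial differences.) The one real difference is the step you yourself flag as the engine: the paper does not prove the monotonicity $c_{EHZ}(X)\le c_{EHZ}(Y)$ but cites it as \cite[Theorem~5.2]{karasev2021}, whereas you sketch a proof of it --- and that sketch has a genuine gap.

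The claimed correspondence is false: closed characteristics of $\partial Y$ do \emph{not} correspond, action-preservingly, to closed characteristics of $\partial X$ lying inside $\langle v\rangle^{\perp_\omega}$, so the characteristics of $Y$ are not an ``action-preserving subfamily'' of those of $X$. A closed characteristic of $\partial X$ (velocity in $J$ applied to the normal cone of $X$) almost never stays inside a fixed hyperplane; and in the other direction, a closed characteristic of $\partial Y$ lifts only to a curve in $\partial X\cap\langle v\rangle^{\perp_\omega}$ whose velocity is a characteristic direction of $\partial X$ merely up to adding multiples of $v$, and this lift generically does not close up: after one period it returns shifted by some multiple of $v$ (a holonomy along the fibers of the reduction). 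The Clarke-dual version of your argument breaks for a dual reason. The reduced body's symplectic support function is
\[
h_Y^{\omega_L}([u])=\max\{\omega(x,u)\mid x\in X\cap\langle v\rangle^{\perp_\omega}\},
\]
the support function of the \emph{section}, which is pointwise $\le h_X^\omega$; hence a loop downstairs, lifted naively, has \emph{larger} dual cost upstairs, so the reduced minimization is not a restriction of the original one to a smaller family, and the inequality you need points the wrong way. Closing this hole is exactly the content of \cite[Theorem~5.2]{karasev2021}: one must lift the optimal loop together with a drift $f(t)v$, chosen pointwise so that $h_X^\omega(\dot\gamma+f v)$ equals the sectional support function, and then ensure $\oint f\,dt=0$ so that the lift closes (for centrally symmetric bodies this can be arranged via the symmetric optimal loop of Lemma~\ref{lemma:symmetry}, which forces the antisymmetry $f(t+T/2)=-f(t)$). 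Either supply such an argument or cite \cite[Theorem~5.2]{karasev2021} as the paper does; as written, your proof of the key inequality does not stand.
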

\begin{proof}
Let us make a linear symplectic reduction of $X$ to a two-dimensional body $Y$. The argument in the proof of Lemma~\ref{lemma:self-polar-reduction} shows that $Y^\omega$ is the reduction of $X^\omega$. By \cite[Theorem~5.2]{karasev2021} there holds 
\[
c_{EHZ}(X)\le c_{EHZ}(Y) = \vol Y,\quad c_{EHZ}(X^\omega) \le c_{EHZ}(Y^\omega) = \vol Y^\omega.
\]
Now the Blaschke--Santal\'o inequality \cite{san1949} shows that
\[
c_{EHZ}(X)\cdot c_{EHZ}(X^\omega)\le \vol Y\cdot \vol Y^\omega =  \vol Y \cdot \vol JY^\circ = \vol Y \cdot \vol Y^\circ \le \pi^2,
\]
which implies the result.
\end{proof}

\begin{corollary}
\label{corollary:ehzjp}
For any centrally symmetric convex body $X\subset\mathbb R^{2n}$ one has
\[
c_{EHZ}(X) \le \pi c_J(X^\omega)^{-1} = \pi\max\{|\omega(x,y)|\ |\ x,y\in X\}.
\]
\end{corollary}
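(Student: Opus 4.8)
The plan is to reduce $X$ to a planar body exactly as in the proof of Theorem~\ref{theorem:capacity-bs}, but to estimate the resulting area by a single sharp inequality rather than by the product form of Blaschke--Santal\'o. Write $M(X)=\max\{|\omega(x,y)|\where x,y\in X\}$. The asserted equality $c_J(X^\omega)^{-1}=M(X)$ is immediate from the definition $c_J(Z)^{-1}=\max\{|\omega(x,y)|\where x,y\in Z^\omega\}$ applied to $Z=X^\omega$, together with the identity $(X^\omega)^\omega=X$ (which holds for centrally symmetric $X$ since $X^\omega=JX^\circ$, $J$ is orthogonal and $J^2=-1$); so the content to prove is $c_{EHZ}(X)\le\pi M(X)$.

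First I would isolate two facts. \emph{(i) $M$ does not increase under a linear symplectic reduction.} One reduction step sends $X$ to $Y=\pi_v\big(X\cap\langle v\rangle^{\perp_\omega}\big)$, where $\pi_v$ is the projection along $v$; as in the proof of Lemma~\ref{lemma:self-polar-reduction}, the reduced form $\bar\omega$ on $\langle v\rangle^{\perp_\omega}/\langle v\rangle$ satisfies $\bar\omega(\bar y_1,\bar y_2)=\omega(y_1,y_2)$ for any representatives $y_1,y_2\in X\cap\langle v\rangle^{\perp_\omega}\subseteq X$, hence $M(Y)\le M(X)$, and this persists along a chain of reductions. \emph{(ii) Planar estimate: $\area Y\le\pi M(Y)$ for every centrally symmetric convex $Y\subset\mathbb R^2$.}

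To prove (ii) I would first note the inclusion $Y\subseteq M(Y)\,Y^\omega$: for fixed $y_0\in Y$ central symmetry gives $\max_{y\in Y}\omega(y,y_0)=\max_{y\in Y}|\omega(y,y_0)|\le M(Y)$, so $y_0/M(Y)\in Y^\omega$ by definition of $Y^\omega$. Since $J$ is orthogonal, $\area Y^\omega=\area JY^\circ=\area Y^\circ$, and the planar Blaschke--Santal\'o inequality $\area Y\cdot\area Y^\circ\le\pi^2$ then yields
\[
\area Y\le M(Y)^2\,\area Y^\omega=M(Y)^2\,\area Y^\circ\le M(Y)^2\,\frac{\pi^2}{\area Y},
\]
whence $(\area Y)^2\le\pi^2M(Y)^2$ and $\area Y\le\pi M(Y)$, with equality for ellipses.

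Finally I would assemble these with the reduction of Theorem~\ref{theorem:capacity-bs}: choosing a linear symplectic reduction of $X$ to a planar $Y$, \cite[Theorem~5.2]{karasev2021} and $c_{EHZ}(Y)=\area Y$ give $c_{EHZ}(X)\le\area Y$, so
\[
c_{EHZ}(X)\le\area Y\le\pi M(Y)\le\pi M(X)=\pi\,c_J(X^\omega)^{-1}.
\]
The one delicate point is the planar estimate (ii): it is exactly what produces the sharp constant $\pi$. It is worth stressing why the two-dimensional passage is needed: routing the argument through the product bound $c_{EHZ}(X)c_{EHZ}(X^\omega)\le\pi^2$ and the lower bound $c_{EHZ}(X^\omega)\ge(2+\frac1n)c_J(X^\omega)$ would only give the weaker constant $\frac{\pi^2}{2}$, whereas in the plane capacity equals area and Blaschke--Santal\'o is sharp.
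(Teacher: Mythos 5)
Your proof is correct, but it takes a different route from the paper's. The paper's own proof is much shorter: it uses the $2$-homogeneity of both sides to scale $X$ so that $c_J(X^\omega)=1$, which (via $(X^\omega)^\omega=X$) is exactly the inclusion $X\subseteq X^\omega$; then monotonicity of the capacity under inclusion gives $c_{EHZ}(X)\le c_{EHZ}(X^\omega)$, and Theorem~\ref{theorem:capacity-bs} yields $c_{EHZ}(X)^2\le c_{EHZ}(X)\cdot c_{EHZ}(X^\omega)\le\pi^2$, hence $c_{EHZ}(X)\le\pi$. You instead redo the two-dimensional reduction from scratch: you observe that the invariant $M(X)=\max\{|\omega(x,y)|\where x,y\in X\}$ is monotone under linear symplectic reduction (a nice observation not stated in the paper), and you prove the planar inequality $\area Y\le\pi M(Y)$ from the inclusion $Y\subseteq M(Y)\,Y^\omega$ and Blaschke--Santal\'o; combined with $c_{EHZ}(X)\le c_{EHZ}(Y)=\area Y$ from \cite[Theorem~5.2]{karasev2021} this closes the argument. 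Both routes ultimately rest on the same machinery (reduction to the plane plus Blaschke--Santal\'o), but the paper hides it inside Theorem~\ref{theorem:capacity-bs} and gets a three-line corollary, whereas your version is self-contained relative to Theorem~\ref{theorem:capacity-bs} and Theorem~\ref{theorem:cehz}, and isolates the planar case $n=1$ of the corollary as the essential content.

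One correction to your closing remark: you claim the two-dimensional passage is \emph{needed} because combining the product bound with the lower bound $c_{EHZ}(X^\omega)\ge(2+\tfrac1n)c_J(X^\omega)$ only gives $\tfrac{\pi^2}{2}$. That particular combination is indeed weaker, but it is not the only alternative: the paper's normalization trick replaces the lower bound on $c_{EHZ}(X^\omega)$ by the monotonicity estimate $c_{EHZ}(X^\omega)\ge c_{EHZ}(X)$, which is available precisely after scaling to $X\subseteq X^\omega$, and this recovers the sharp constant $\pi$ with no new planar argument. So your estimate (ii) is sufficient but not necessary for the sharp constant.
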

\begin{proof}
Note that both the left hand side and the right hand side are $2$-homogeneous with respect to scaling of $X$. Then scale $X$ so that $c_J(X^\omega)=1$. By the definition of $c_J$ this means that 
\[
\forall x,y\in X\ |\omega(x,y)|\le 1
\]
and the equality sometimes attained. This may be 
$X\subseteq X^\omega$. From the monotonicity of the capacity and Theorem~\ref{theorem:capacity-bs} we have
\[
c_{EHZ}(X)^2 \le c_{EHZ}(X)\cdot c_{EHZ}(X^\omega) \le \pi^2\Rightarrow c_{EHZ}(X)\le \pi,
\]
which finishes the proof.
\end{proof}

\section{Lower bounds on the affine cylindrical capacity}
\label{section:affine}

Let us draw some higher-dimensional consequences from Corollary~\ref{corollary:volr2}. Consider a symplectic linear subspace $L \subset \mathbb{R}^{2n}$ and the $\omega$-orthogonal decomposition $\mathbb{R}^{2n} = L \oplus L^{\perp \omega}$. Let the projection onto $L$ along $L^{\perp \omega}$ be $\pi_{L,\omega}$, and the restriction of $\omega$ to $L$ be $\omega_L$ for brevity.

\begin{lemma}
\label{lemma:proj}
Let $X \subset \mathbb{R}^{2n}$ be a centrally symmetric convex body such that $X^{\omega} \subseteq X$. Then $Y = \pi_{L,\omega}(X) \subset L$ in the above notation is centrally symmetric and $Y^{\omega_L} \subseteq Y$. 
\end{lemma}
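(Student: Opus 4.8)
The plan is to first dispose of central symmetry, which is immediate, and then reduce the inclusion $Y^{\omega_L}\subseteq Y$ to a single polarity-projection identity combined with the hypothesis $X^\omega\subseteq X$. Central symmetry of $Y$ costs nothing: since $X=-X$ and $\pi_{L,\omega}$ is linear, $Y=\pi_{L,\omega}(X)=\pi_{L,\omega}(-X)=-Y$.

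The heart of the argument is to show that symplectic polarity taken inside $L$ converts the projection $\pi_{L,\omega}$ into the section by $L$, that is,
\[
Y^{\omega_L}=X^\omega\cap L.
\]
To establish this I would fix $y\in L$ and decompose an arbitrary $x\in\mathbb R^{2n}$ as $x=\pi_{L,\omega}(x)+x'$ with $x'\in L^{\perp\omega}$. Because $y\in L$ and $x'\in L^{\perp\omega}$, the definition of the $\omega$-orthogonal complement (together with antisymmetry of $\omega$) forces the cross term to vanish, $\omega(x',y)=0$, so that $\omega(x,y)=\omega_L(\pi_{L,\omega}(x),y)$. Consequently, for $y\in L$ the defining condition $\forall x\in X,\ \omega(x,y)\le 1$ of membership in $X^\omega\cap L$ is literally the same as the condition $\forall z\in Y,\ \omega_L(z,y)\le 1$ of membership in $Y^{\omega_L}$, which yields the displayed identity. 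This is the symplectic analogue of the elementary Euclidean fact that the polar of an orthogonal projection is the section of the polar, and it is the same duality already exploited in the proof of Lemma~\ref{lemma:self-polar-reduction}.

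With the identity in hand the conclusion follows in one line. By hypothesis $X^\omega\subseteq X$, hence $Y^{\omega_L}=X^\omega\cap L\subseteq X\cap L$; and since $\pi_{L,\omega}$ fixes $L$ pointwise, $X\cap L=\pi_{L,\omega}(X\cap L)\subseteq\pi_{L,\omega}(X)=Y$, giving $Y^{\omega_L}\subseteq Y$.

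I do not expect a genuine obstacle here: the only point demanding care is to verify that the cross term $\omega(x',y)$ really vanishes, which rests on choosing $L^{\perp\omega}$ (rather than a Euclidean orthogonal complement) as the kernel of the projection, so that any $y\in L$ pairs to zero with the discarded component. One should also record that $Y$ is a convex body in $L$ (the projection of a convex body with the origin in its interior), so that $Y^{\omega_L}$ is well defined; this is routine.
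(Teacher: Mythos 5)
Your proof is correct and follows essentially the same route as the paper: both arguments rest on the duality fact that symplectic polarity inside $L$ turns the projection $\pi_{L,\omega}$ into the section by $L$, combined with monotonicity and the hypothesis $X^\omega\subseteq X$ (the paper writes this as $Y^{\omega_L}\subseteq\pi_{L,\omega}(X^\omega)\subseteq\pi_{L,\omega}(X)=Y$, while you intersect with $L$ instead of projecting, which is an immaterial difference). Your write-up is in fact more complete, since you verify the identity $Y^{\omega_L}=X^\omega\cap L$ via the vanishing cross term $\omega(x',y)=0$, a step the paper dismisses as ``easy to see.''
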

\begin{proof}
It is easy to see that $Y^{\omega_L} \subseteq \pi_{L,\omega}(X^{\omega})$ and therefore 
\[
Y^{\omega_L} \subseteq \pi_{L,\omega}(X^{\omega}) \subseteq \pi_{L,\omega}(X) = Y.
\]
\end{proof}

Consider a variant of symplectic capacity which is only affine-invariant (not invariant under non-linear symplectic diffeomorphism as demanded in the abstract axioms of a symplectic capacity). This was called $\bar c_{lin}$ and related to the Ekeland--Hofer--Zehnder and other capacities in \cite{gluskin2015}. Here we use a different notation with larger letters.

\begin{definition}
The \emph{cylindrical affine capacity} of a convex body $X \subset \mathbb{R}^{2n}$ is the value
\[
c_{ZA}(X) = \inf \vol(\pi_{L,\omega}(X)),
\]
where the infimum is taken over symplectic two-dimensional linear subspaces $L \subset \mathbb{R}^{2n}$ and the volume is defined with $\omega_L$.
\end{definition}

Now we show, in particular, that $c_{ZA}$ for symplectically self-polar bodies has a better lower bound than $c_{EHZ}$.

\begin{theorem}
\label{theorem:za3}
Let $X \subset \mathbb{R}^{2n}$ be a centrally symmetric convex body such that $X^{\omega} \subseteq X$, then
\[
c_{ZA}(X) \geq 3.
\]
\end{theorem}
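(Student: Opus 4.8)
The plan is to reduce the statement to the two-dimensional case already settled in Corollary~\ref{corollary:volr2}. By the definition of the cylindrical affine capacity, it suffices to show that for \emph{every} symplectic two-dimensional linear subspace $L \subset \mathbb{R}^{2n}$ the projected body $Y = \pi_{L,\omega}(X)$ has $\vol Y \ge 3$; taking the infimum over all such $L$ then yields $c_{ZA}(X) \ge 3$.

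First I would invoke Lemma~\ref{lemma:proj}: since $X$ is centrally symmetric with $X^\omega \subseteq X$, the projection $Y = \pi_{L,\omega}(X) \subset L$ is centrally symmetric and satisfies $Y^{\omega_L} \subseteq Y$, where $\omega_L$ is the restriction of $\omega$ to $L$. This transfers the self-polar-containment hypothesis from the ambient body to its two-dimensional reduction, which is exactly the structure needed to apply the planar bound.

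Next I would identify $(L, \omega_L)$ with the standard symplectic plane $(\mathbb{R}^2, \omega)$ by a linear symplectomorphism. Such an identification carries $\omega_L$-polarity to the standard $\omega$-polarity and preserves the symplectic area, so $Y$ becomes a centrally symmetric convex body in $\mathbb{R}^2$ with $Y^{\omega} \subseteq Y$ of the same volume. Corollary~\ref{corollary:volr2} then gives $\vol Y \ge 3$.

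Since this bound holds uniformly over all symplectic planes $L$, the infimum defining $c_{ZA}(X)$ is at least $3$, which completes the argument. The content lies entirely in the two auxiliary results: Lemma~\ref{lemma:proj} guarantees that symplectic projection respects the inclusion $X^\omega \subseteq X$, and Corollary~\ref{corollary:volr2} supplies the sharp planar bound. I do not expect a genuine obstacle here; the only point requiring care is verifying that the symplectic identification of $L$ with $\mathbb{R}^2$ is simultaneously compatible with the polarity operation and with the volume normalization induced by $\omega_L$, and this is routine.
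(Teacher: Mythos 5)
Your proposal is correct and follows exactly the paper's own argument: apply Lemma~\ref{lemma:proj} to transfer the hypothesis $X^\omega \subseteq X$ to the planar projection $Y = \pi_{L,\omega}(X)$, then invoke Corollary~\ref{corollary:volr2} to get $\vol Y \ge 3$, and take the infimum over symplectic planes $L$. The extra care you take with the symplectic identification of $(L,\omega_L)$ with the standard plane is a detail the paper leaves implicit, but it is indeed routine and does not change the substance.
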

\begin{proof}
Take a two-dimensional linear symplectic subspace $L \subset \mathbb{R}^{2n}$. By Lemma~\ref{lemma:proj} the projection $Y = \pi_{L,\omega}(X)$ satisfies $Y^{\omega_L} \subseteq Y$. Then Corollary~\ref{corollary:volr2} implies that $\vol Y \ge 3$.
\end{proof}

This result may be restated without a mention of the symplectic polar, close to the statement of \cite[Theorem 1.1]{akopyan2018capacity}.

\begin{corollary}
\label{corollary:zaj}
For any centrally symmetric convex body $X\subset\mathbb R^{2n}$ one has
\[
c_{ZA}(X)\ge 3 c_J(X).
\]
\end{corollary}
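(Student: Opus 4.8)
The goal is to prove Corollary~\ref{corollary:zaj}: for any centrally symmetric convex body $X\subset\mathbb R^{2n}$ one has $c_{ZA}(X)\ge 3c_J(X)$. The plan is to reduce this general statement to Theorem~\ref{theorem:za3}, which already gives the bound $c_{ZA}(X)\ge 3$ under the additional hypothesis $X^\omega\subseteq X$, by exploiting homogeneity. I would first record the scaling behaviour of both quantities. The capacity $c_{ZA}$ is a $2$-homogeneous function of $X$: replacing $X$ by $aX$ scales every projection $\pi_{L,\omega}(aX)=a\,\pi_{L,\omega}(X)$, and since each projection lives in a two-dimensional symplectic subspace its area picks up a factor $a^2$, so $c_{ZA}(aX)=a^2 c_{ZA}(X)$. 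By the reformulation noted just after Theorem~\ref{theorem:capacity-bs}, the invariant $c_J$ is likewise $2$-homogeneous, with $c_J(X)=\max\{a^{-2}\mid (aX)^\omega\subseteq aX\}$.

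Next I would normalize. Because $c_J$ is $2$-homogeneous and positive, I can choose a scale $a>0$ so that the rescaled body $X'=aX$ satisfies $c_J(X')=1$. By the stated equivalence ($X'^\omega\subseteq X'$ if and only if $c_J(X')\ge 1$), this normalization guarantees $X'^\omega\subseteq X'$. Now Theorem~\ref{theorem:za3} applies directly to $X'$ and yields $c_{ZA}(X')\ge 3$. Unwinding the homogeneity, $c_{ZA}(X')=a^2 c_{ZA}(X)$ and $1=c_J(X')=a^2 c_J(X)$, so $a^2=c_J(X)^{-1}$; substituting gives $c_{ZA}(X)=a^{-2}c_{ZA}(X')=c_J(X)\,c_{ZA}(X')\ge 3c_J(X)$, which is exactly the claimed inequality.

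This is essentially the same normalization device used in the proof of Corollary~\ref{corollary:ehzjp}, transplanted from $c_{EHZ}$ to $c_{ZA}$. I do not expect a genuine obstacle here: the only points requiring care are confirming that $c_{ZA}$ is honestly $2$-homogeneous (which follows because a linear scaling commutes with the symplectic projection $\pi_{L,\omega}$ and the area in each symplectic plane is quadratic in the scaling), and checking that the maximum defining $c_J$ is attained so that the normalization $c_J(X')=1$ is achievable rather than merely approached. Since both quantities are continuous and $2$-homogeneous, even if one preferred to avoid attainment one could argue with an arbitrary $a$ making $c_J(aX)$ as close to $1$ as desired and pass to a limit, but the cleanest route is simply to invoke the established equivalence $X^\omega\subseteq X\iff c_J(X)\ge 1$ at the normalized scale.
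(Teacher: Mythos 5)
Your proposal is correct and follows essentially the same route as the paper: both arguments use the $2$-homogeneity of $c_{ZA}$ and $c_J$ to rescale $X$ so that $c_J(X)=1$, deduce $X^\omega\subseteq X$ from the characterization of $c_J$, and then invoke Theorem~\ref{theorem:za3}. Your write-up merely spells out the homogeneity bookkeeping and the attainment issue that the paper leaves implicit.
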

\begin{proof}
Note that both the left hand side and the right hand side are $2$-homogeneous with respect to scaling of $X$. Then scale $X$ so that $c_J(X)=1$. By the definition of $c_J$ this means that $X^\omega \subseteq (X^\omega)^\omega = X$, and therefore Theorem~\ref{theorem:za3} finishes the proof.
\end{proof}

We also have a corollary showing that if one wants to use $c_{ZA}$ in place of $c_{EHZ}$ in Viterbo's conjecture then the statement should be weakened. 

\begin{corollary}
\label{corollary:affine-anti-viterbo}
If the inequality
\[
\vol(X) \geq b^n \frac{c_{ZA}(X)^n}{n!}
\]
holds for any $n$, any centrally symmetric convex $X \subset \mathbb{R}^{2n}$, and a constant $b$ then $b \leq 2/3$.
\end{corollary}
\begin{proof}
Take the cube $Q^n = [-1,1]^n$, the cross-polytope $C^n = (Q^n)^\circ$, and set $X = Q^n\oplus_2 C^n$. This is a symplectically self-polar body with $c_{ZA}(X)\ge 3$ by Theorem~\ref{theorem:za3}. Its volume by Lemma~\ref{lemma:ellp-sum-vol} and Stirling's formula is
\[
\vol Q^n \oplus_2 C^n = \frac{4^n}{n!} \cdot \frac{\left( (n/2)! \right)^2 }{n!} = c_n \frac{4^n n^2 2^{-n} e^{-n}}{n! n^n e^{-n}} = c_n \frac{2^n}{n!}
\]
up to sub-exponential $c_n$. This shows that the exponential factor $b^n$ cannot be greater than $(2/3)^n$.
\end{proof}

Let us show the sharpness of Theorem~\ref{theorem:za3} and Corollary~\ref{corollary:zaj} in the following strong sense.

\begin{theorem}
There exist symplectically self-polar bodies $X\subset\mathbb R^{2n}$ in arbitrary dimension such that $c_{ZA}(X)=3$.
\end{theorem}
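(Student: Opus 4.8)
The plan is to take the two-dimensional example of Corollary~\ref{corollary:volr2} and promote it to all dimensions by an $\ell_2$-sum. I would work in $\mathbb R^{2n}$ with coordinates $(x_1,y_1,\dots,x_n,y_n)$ and $\omega = \sum_i dx_i\wedge dy_i$, so that the coordinate planes $L_i = \langle e_{x_i}, e_{y_i}\rangle$ are symplectic, mutually $\omega$-orthogonal, and $L_i^{\perp_\omega} = \bigoplus_{j\ne i} L_j$ coincides with the Euclidean orthogonal complement. Let $H\subset\mathbb R^2$ be the hexagon $\conv\{\pm(0,1),\pm(1,0),\pm(1,1)\}$ from Corollary~\ref{corollary:volr2}; a direct computation gives $H^\circ = \conv\{\pm(1,0),\pm(0,1),\pm(1,-1)\}$ and hence $H^\omega = JH^\circ = H$, so that $H$ is symplectically self-polar with $\vol H = 3$. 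I would place a copy $H_i$ of $H$ in each plane $L_i$ and set $X = H_1\oplus_2\cdots\oplus_2 H_n$.

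First I would check that $X$ is symplectically self-polar. Since the standard $J$ maps each $L_i$ to itself and the Euclidean polar of an $\ell_2$-sum of bodies lying in mutually orthogonal subspaces is the $\ell_2$-sum of their polars, I obtain
\[
X^\omega = JX^\circ = J\bigl(H_1^\circ\oplus_2\cdots\oplus_2 H_n^\circ\bigr) = (JH_1^\circ)\oplus_2\cdots\oplus_2(JH_n^\circ) = H_1\oplus_2\cdots\oplus_2 H_n = X,
\]
which is exactly the mechanism, used already in the proof of Theorem~\ref{theorem:mahler-polar}, that the $\ell_2$-sum commutes with $\omega$-duality of centrally symmetric bodies.

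With $X=X^\omega$ established, the lower bound $c_{ZA}(X)\ge 3$ is immediate from Theorem~\ref{theorem:za3}. For the matching upper bound I would exhibit a single symplectic two-plane whose projection has area exactly $3$. Taking $L = L_1$, the map $\pi_{L_1,\omega}$ is the orthogonal projection onto $L_1$ along $\bigoplus_{j\ge 2}L_j$, and the shadow of an $\ell_2$-sum onto one of its factors is precisely that factor, since every $u_1\in H_1$ is realized by the point $(u_1,0,\dots,0)\in X$. Hence $\pi_{L_1,\omega}(X) = H_1$, whose $\omega_{L_1}$-area equals $\vol H = 3$, giving $c_{ZA}(X)\le 3$. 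Combining the two bounds yields $c_{ZA}(X)=3$.

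The substance of the argument lies entirely in two inputs already in hand — the explicit self-polar hexagon of area $3$ and the lower bound of Theorem~\ref{theorem:za3} — so I do not expect a genuine obstacle. The one point requiring care is the interface between the symplectic and Euclidean structures: one must choose the copies $H_i$ so that $J$ preserves each $L_i$, that is, so that the orthogonal Lagrangian splitting is aligned with the symplectic splitting. This alignment is what guarantees both that $J$ distributes over the $\ell_2$-sum in the self-polarity computation and that $L_1^{\perp_\omega}$ coincides with the Euclidean complement in the projection computation. Once the coordinates are fixed as above, both verifications are routine.
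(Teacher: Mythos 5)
Your proof is correct, but it follows a genuinely different route from the paper's. The paper does not build an explicit body: it places the hexagon $P$ flat inside one symplectic $2$-plane of $\mathbb R^{2n}$, thickens it to $Q=\conv(P\cup B_0(\varepsilon))$ with $Q^\omega = (P\times\mathbb R^{2n-2})\cap B_0(1/\varepsilon)$, and then invokes Lemma~\ref{lemma:inside} --- a Zorn's-lemma sandwiching argument --- to produce \emph{some} self-polar $X$ with $Q\subseteq X\subseteq Q^\omega\subset P\times\mathbb R^{2n-2}$; the inclusions force the projection onto the span of $P$ to be exactly $P$, giving $c_{ZA}(X)\le 3$, and Theorem~\ref{theorem:za3} gives the matching lower bound. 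Your construction $X=H_1\oplus_2\cdots\oplus_2 H_n$ replaces the non-constructive step by the explicit $\ell_2$-sum calculus: self-polarity follows from the commutation of $\ell_2$-sums with $\omega$-duality (a fact the paper itself uses in the proof of Theorem~\ref{theorem:mahler-polar}), and the projection onto one coordinate symplectic plane is visibly the hexagon. All your individual steps check out: $H^\omega=H$ with $\vol H=3$, the polar of an orthogonal $\ell_2$-sum is the $\ell_2$-sum of the polars taken within the factors, $J$ is block-diagonal with respect to the coordinate planes, and $L_1^{\perp\omega}$ coincides with the Euclidean complement in your coordinates. What each approach buys: yours is fully explicit, avoids the axiom of choice, and even permits a volume computation via Lemma~\ref{lemma:ellp-sum-vol} (namely $\vol X = 3^n/n!$, comfortably above the conjectured bound $2^n/n!$); the paper's argument, by contrast, exhibits a whole family of examples concentrated near the degenerate cylinder $P\times\mathbb R^{2n-2}$ and showcases Lemma~\ref{lemma:inside} as a general existence tool for self-polar bodies, which is of independent interest elsewhere in the paper.
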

\begin{proof}
Take the planar hexagon $P=\conv\{\pm(0,1), \pm(1,0),\pm(1,1)\}$ and put it into the standard symplectic $2$-plane $\mathbb R^2\subset \mathbb R^{2n}$. Note that 
\[
P^\omega = P\times \mathbb R^{2n-2}. 
\]
These are unbounded convex bodies, but the definition and the properties of the polar apply. We may produce from them a convex body $Q = \conv (P\cup B_0(\varepsilon))$ and its symplectic polar $Q^\omega = P^\omega\cap B_0(1/\varepsilon)$. For sufficiently small $\varepsilon>0$, the projection of $Q$ to the linear span of $P$ equals $P$.

By Lemma~\ref{lemma:inside} (see below) there exists a symplectically self-polar body $X$ such that 
\[
Q\subseteq X\subseteq Q^\omega \subset P\times \mathbb R^{2n-2}.
\]
Obviously, the projection of $X$ to the linear span of $P$ is $P$ and therefore $c_{ZA}(X)\le 3$.
\end{proof}

\begin{lemma}
\label{lemma:inside}
If $Y \subset \mathbb{R}^{2n}$ is a centrally symmetric convex body such that $Y^{\omega} \subseteq Y$ then there exists a symplectically self-polar body $X \subset \mathbb{R}^{2n}$ such that $Y^\omega\subseteq X^\omega = X \subseteq Y$.
\end{lemma}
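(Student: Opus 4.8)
The plan is to realize the desired body $X$ as a fixed point of the symplectic polarity map, restricted to the order interval between $Y^\omega$ and $Y$. The starting observation is that, for a body with $X=X^\omega$, the two containments in the conclusion are equivalent: applying the inclusion-reversing involution $Z\mapsto Z^\omega$ to $X\subseteq Y$ gives $Y^\omega\subseteq X^\omega=X$, and conversely. Thus it suffices to produce a single symplectically self-polar $X$ with $Y^\omega\subseteq X\subseteq Y$.

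Set $\mathcal{K}=\{X\subseteq\mathbb R^{2n} \where X \text{ convex},\ Y^\omega\subseteq X\subseteq Y\}$ and let $T(X)=X^\omega$. First I would check that $T$ maps $\mathcal{K}$ into itself: if $Y^\omega\subseteq X\subseteq Y$ then, applying the order-reversing involution and using $(Y^\omega)^\omega=Y$, one gets $Y^\omega\subseteq X^\omega\subseteq Y$, so $T(X)\in\mathcal{K}$. Here the hypothesis $Y^\omega\subseteq Y$ is exactly what guarantees that $\mathcal{K}$ is nonempty (it contains both $Y$ and $Y^\omega$). A fixed point of $T$ in $\mathcal{K}$ is then precisely a self-polar body sandwiched as required.

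To produce a fixed point I would invoke Schauder's fixed point theorem. Embed $\mathcal{K}$ into the Banach space $C(S^{2n-1})$ via support functions $X\mapsto h_X$. Under this embedding $\mathcal{K}$ corresponds to $\{h \where h \text{ sublinear},\ h_{Y^\omega}\le h\le h_Y\}$, which is convex (convex combinations of support functions are again support functions, via Minkowski addition, and the two-sided bounds are preserved) and compact (the functions are uniformly bounded by $\max_{S^{2n-1}}h_Y$ and uniformly Lipschitz, since $X\subseteq Y$ sits in a fixed ball, so Arzel\`a--Ascoli applies, and a uniform limit of such sublinear functions is again one of them). Every $X\in\mathcal{K}$ contains the fixed body $Y^\omega$, which has the origin in its interior, and is contained in $Y$; on a class of bodies enjoying such uniform inner and outer ball bounds the Euclidean polarity is continuous in the Hausdorff metric, and since $X^\omega=JX^\circ$ with $J$ a linear isometry, $T$ is continuous in $C(S^{2n-1})$. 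Schauder's theorem then yields $X\in\mathcal{K}$ with $X^\omega=X$, which is the assertion.

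The part I expect to require the most care is the verification that $T$ is continuous and that $\mathcal{K}$ is compact and convex in the chosen topology: one must be sure to work with the Minkowski (support-function) convex structure, and that the uniform containment $Y^\omega\subseteq X\subseteq Y$ supplies exactly the inner/outer ball bounds on which continuity of polarity depends. These are standard facts of convex geometry, but they are precisely where the hypothesis $Y^\omega\subseteq Y$ enters. Once the fixed point is obtained, self-polarity of $X$ is immediate from $X=T(X)=X^\omega$, and the sandwich $Y^\omega\subseteq X^\omega=X\subseteq Y$ follows from $X\in\mathcal{K}$.
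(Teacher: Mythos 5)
Your proof is correct, and it takes a genuinely different route from the paper's. The paper argues order-theoretically: it considers the family of convex bodies $Z\subseteq Y$ satisfying $Z^\omega\subseteq Z$, checks by an explicit computation with intersections and closures of unions that every chain in this family has a lower bound, extracts a minimal element $X$ by Zorn's lemma, and shows that minimality forces $X^\omega=X$ via a cutting construction: if some $p\in X\setminus X^\omega$ existed, the strictly smaller body $X\cap\{z \where |\omega(z,p)|\le 1\}$ would still contain its symplectic polar, a contradiction. You instead topologize the problem: the order interval $\{X \where Y^\omega\subseteq X\subseteq Y\}$, viewed through support functions in $C(S^{2n-1})$, is compact and convex; the map $X\mapsto X^\omega$ preserves it and is continuous there precisely because of the uniform inner and outer ball bounds supplied by $Y^\omega$ and $Y$; and Schauder's fixed point theorem produces the self-polar body. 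Both arguments rest on a nonconstructive existence principle (Zorn vs.\ Schauder). Yours is shorter modulo the standard convex-geometry facts it cites (Minkowski-linear structure of support functions, Arzel\`a--Ascoli, Hausdorff-continuity of polarity under two-sided ball bounds), and it cleanly isolates where the hypothesis $Y^\omega\subseteq Y$ enters; note that polarity is not affine with respect to the Minkowski structure, so the full nonlinear Schauder theorem is genuinely needed rather than an affine fixed point theorem. The paper's proof needs no functional-analytic machinery and is more informative in one respect: the cutting step shows concretely how to improve a body that strictly contains its polar, and the outcome is a minimal such body rather than an unspecified fixed point. Your preliminary observation that the two sandwich inclusions are equivalent once $X=X^\omega$ is also correct, using that $(Y^\omega)^\omega=Y$ for centrally symmetric $Y$.
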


\begin{proof}
Let the inclusion $Y^{\omega} \subset Y$ be strict. Take a point $p \in Y \setminus Y^{\omega}$ and consider the convex body 
\[
Z = Y \cap \{x \in \mathbb{R}^{2n}\ |\ |\omega(x,p)| \leq 1\}, 
\]
then 
\[
Z^{\omega} = \conv(Y^{\omega}\cup\{\pm p\}) \subseteq Y. 
\]
Any point $z \in Z^{\omega}$ can be represented as $z = ty \pm (1-t)p$, where $y \in Y^{\omega}, t \in [0,1]$. Since $|\omega(y,p)| \leq 1$ and $\omega(p,p) = 0$, it follows that $|\omega(z,p)| \leq 1$. Hence 
\[
Y^{\omega} \subset Z^{\omega} \subseteq Z \subset Y.
\]
	
Now consider the family of all convex bodies $Z \subseteq Y$ such that $Z^{\omega} \subseteq Z$ ordered by inclusion. Let us show that it satisfies the hypothesis of the Zorn lemma. Let $\{Z_{\alpha}\}_{\alpha \in I}$ be a chain of such sets. Set $Z = \bigcap_{\alpha \in I} Z_{\alpha}$, then
\[
Z^{\omega} =  JZ^{\circ} = \left(\bigcap_{\alpha \in I} JZ_{\alpha}\right)^{\circ} = \cl\bigcup_{\alpha \in I} JZ_{\alpha}^{\circ} = \cl\bigcup_{\alpha \in I} Z_{\alpha}^{\omega}. 
\]
Let $Z_\alpha\subseteq Z_\beta$ be two members of that chain, then $Z_\alpha^\omega\supseteq Z_\beta^\omega$. Fixing $\beta$ and using the chain property one sees that
\[
\bigcup_{\alpha \in I} Z_{\alpha}^{\omega} = \bigcup_{\alpha \in I,\ Z_\alpha\subseteq Z_\beta} Z_{\alpha}^{\omega}.
\]
Since $Z_{\alpha}^{\omega} \subseteq Z_\alpha \subseteq Z_\beta$ whenever $Z_\alpha\subseteq Z_\beta$, one has 
\[
\bigcup_{\alpha \in I} Z_{\alpha}^{\omega} = \bigcup_{\alpha \in I,\ Z_\alpha\subseteq Z_\beta} Z_{\alpha}^{\omega}\subseteq Z_\beta.
\]
Since this holds for any $\beta\in I$, we may pass to the intersection and write
\[
\bigcup_{\alpha \in I} Z_{\alpha}^{\omega}\subseteq  Z.
\]	
Since $Z$ is closed, then the inclusion is preserved after taking the closure, and finally 
\[
Z^{\omega} = \cl\bigcup_{\alpha \in I} Z_{\alpha}^{\omega} \subseteq Z.
\]
By the Zorn lemma the family contains a minimal element $X$. The argument in the beginning of the proof (for $X$ in place of $Y$) shows that $X^{\omega} = X$ must hold assuming the minimality.
\end{proof}

\section{Appendix}
\label{section:appendix}

Here we present the part of \cite{akopyan2018capacity} that was used above to make this exposition self-contained. 

\begin{theorem}[The symmetric case of Theorem 1.1 in \cite{akopyan2018capacity}]
\label{theorem:ehz-estimate}
For a centrally symmetric convex body $X\subset \mathbb R^{2n}$
\[
c_{EHZ}(X) \ge \left(2 + \frac{1}{n}\right) c_J(X).
\]
\end{theorem}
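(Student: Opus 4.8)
The plan is to work directly with the variational description of $c_{EHZ}$ by closed characteristics, exploiting that the dual body $X^\omega$ controls the velocity of a characteristic. Since both sides are $2$-homogeneous in $X$, I would first rescale so that $c_J(X)=1$; by the reformulation recorded after Theorem~\ref{theorem:cehz} this means $X^\omega\subseteq X$ together with $\max\{|\omega(x,y)|\ |\ x,y\in X^\omega\}=1$, and the goal becomes the clean inequality $c_{EHZ}(X)\ge 2+\frac1n$. By continuity of $c_{EHZ}$ I may assume $X$ is smooth (or a polytope, for the discrete version used at the end).

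Next I would set up the minimal closed characteristic. Writing $H=\tfrac12\|\cdot\|_X^2$, a closed characteristic $z:[0,T]\to\partial X$ solves $\dot z=J\nabla H(z)$, and on $\partial X$ one has $\dot z=J\nabla\|z\|_X$. Two facts drive the argument: the velocity satisfies $w(t):=\dot z(t)\in J\,\partial X^\circ=\partial X^\omega$, so in particular $w(t)\in X^\omega$ and $|\omega(w(s),w(t))|\le 1$ for all $s,t$; and the Euler relation gives the pointwise normalization $\omega(z(t),w(t))=1$, whence $c_{EHZ}(X)=\tfrac12\int_0^T\omega(z,w)\,dt=T/2$. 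Thus it suffices to bound the period $T$ from below.

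The core factor-$2$ step uses central symmetry. For a centrally symmetric body the minimal characteristic may be taken symmetric, $z(t+T/2)=-z(t)$, so with $p:=z(0)\in\partial X$ one has $\int_0^{T/2}w\,dt=z(T/2)-z(0)=-2p$. Since $X^\omega$ is convex and contains every $w(t)$, its average over $[0,T/2]$ lies in $X^\omega$, i.e. $\tfrac{-4p}{T}\in X^\omega$ and hence $\tfrac4T\|p\|_{X^\omega}\le 1$. On the other hand $X^\omega\subseteq X$ forces $\|p\|_{X^\omega}\ge\|p\|_X=1$, and combining these gives $T\ge 4$, that is $c_{EHZ}(X)\ge 2$.

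Finally, the refinement from $2$ to $2+\frac1n$ is where the real work lies, and I expect it to be the main obstacle. The averaging bound above is tight only when $w$ is essentially constant equal to $-4p/T$; but this is incompatible with $\omega(z(t),w(t))=1$, because $\omega(z(t),p)$ vanishes at both endpoints $t=0,T/2$ and so cannot remain the nonzero constant that a frozen $w$ would demand. The task is therefore to quantify how much $w$ must rotate across the $2n$ available dimensions while maintaining $\omega(z,w)=1$ and $|\omega(w(s),w(t))|\le 1$, and to show that this forced spreading shortens the average by exactly the factor $1+\frac{1}{2n}$, yielding $T\ge 4+2/n$. I would attempt this in the polytopal model, where the characteristic is a closed polygon with edge directions $w_k\in\partial X^\omega$ and dwell times $\tau_k$ satisfying $\sum_k\tau_k w_k=0$ and $\omega(z_k,w_k)=1$, reducing the estimate to an extremal inequality over orderings of the $w_k$ and weights $\tau_k$ subject to $|\omega(w_j,w_k)|\le 1$. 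Extracting the sharp dimensional constant $2+\frac1n$ from this combinatorial optimum is precisely the content of Theorem~1.1 of \cite{akopyan2018capacity} and is the step I expect to require the most care.
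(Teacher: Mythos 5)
Your setup and the factor-$2$ step are sound: the normalization $c_J(X)=1$, the identities $w=\dot z\in\partial X^\omega$ and $\omega(z,\dot z)=g_X(z)=1$ giving $c_{EHZ}(X)=T/2$, and the averaging argument over a half-period of a symmetric characteristic correctly yield $T\ge 4$, i.e. $c_{EHZ}(X)\ge 2c_J(X)$. But the theorem asserts the constant $2+\frac1n$, and your proposal never proves the extra $\frac1n$: you describe a hoped-for "forced rotation" estimate, say you would attempt it in a polytopal model, and then identify that step as "precisely the content of Theorem~1.1 of \cite{akopyan2018capacity}" --- which is the very statement being proven. This is a genuine gap (indeed circular as written), and it is exactly where the substance of the theorem lies; $c_{EHZ}(X)\ge 2c_J(X)$ is the easy part.

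The ingredient you are missing is Schaffer's girth theorem (Lemma~\ref{lemma:schaffer}, Theorems 13E--13F in \cite{schaffer1976}): a centrally symmetric closed curve on the boundary of a centrally symmetric convex body in $\mathbb R^{d}$ has length at least $4+4/d$ in the norm whose unit ball is that body. The paper's proof combines this with the pointwise velocity bound $g_X(\dot\gamma)=g_X(J\nabla g_X(\gamma))\le c_J(X)^{-1}$, valid because $\nabla g_X(\gamma)\in X^\circ$, to get
\[
4+\tfrac{2}{n}\ \le\ \int_0^T g_X(\dot\gamma)\,dt\ \le\ \frac{T}{c_J(X)}=\frac{2\,c_{EHZ}(X)}{c_J(X)},
\]
with $d=2n$, which is the claimed inequality. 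So the dimensional gain does not come from pairwise constraints $|\omega(w_j,w_k)|\le 1$ among velocities, as your combinatorial model suggests, but from a metric girth estimate: the characteristic is a symmetric closed curve on $\partial X$ whose $\|\cdot\|_X$-speed is at most $c_J(X)^{-1}$, hence its period is at least $c_J(X)$ times its $\|\cdot\|_X$-length. Note also that the central symmetry of a minimal characteristic, which you assert in passing, itself requires proof; the paper establishes it (Lemma~\ref{lemma:symmetry}) by a cut-and-reflect argument in Clarke's dual variational problem, and your argument needs that lemma just as much as the paper's does.
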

\begin{proof}
We may assume that $X$ is smooth since both sides of the inequality are continuous in $X$ and the non-smooth case follows by approximation. Following~\cite{gluskin2015}, we consider a closed characteristic $\gamma : [0, T] \to \partial X$ satisfying the equation
\begin{equation}
\label{equation:characteristic}
\dot\gamma = J\nabla g_X(\gamma)
\end{equation}
and having the minimal possible action $A(\gamma) = c_{EHZ}(X)$. Here $g_X$ is the gauge function of $X$ (so that $X=\{z\in\mathbb R^{2n}\ |\ g_X(z)\le 1\}$), $J$ is the complex rotation. Note that since $g_X$ is $1$-homogeneous then the action is
\begin{multline*}
c_{EHZ}(X) = A(\gamma)=\frac{1}{2} \int_0^T \omega(\gamma(t),\dot\gamma(t)) \; dt = - \frac{1}{2} \int_0^T \gamma(t)\cdot J \dot \gamma(t) \; dt = \\
= \frac{1}{2} \int_0^T \gamma(t)\cdot \nabla g_X(\gamma(t)) \; dt = \frac{1}{2} \int_0^T g_X(\gamma(t)) \; dt = T/2. 
\end{multline*}
For centrally symmetric $X$, at least one of the optimal characteristics $\gamma$ (we choose this one) has to be centrally symmetric with respect to the origin by Lemma~\ref{lemma:symmetry}. Then Lemma~\ref{lemma:schaffer} (see below) yields the estimate
\[
\int_0^T g_X(\dot\gamma)\; dt \ge 4 + \frac{2}{n}.
\]
We conclude the proof as in~\cite{gluskin2015},
\[
4 + \frac{2}{n} \le \int_0^T g_X(\dot\gamma)\; dt \le \int_0^T g_X(J\nabla g_X(\gamma))\; dt \le \int_0^T c_J(X)^{-1}\; dt = \frac{T}{c_J(X)} = \frac{2 A(\gamma)}{c_J(X)}.
\]
Here we use that $y = \nabla g_X\in X^\circ$ from $1$-homogeneity of $g_X$ and 
\[
g_X(Jy) = \max\{ Jy\cdot x\ |\  x\in X^\circ\} = \max\{ \omega(y, x)\ |\  x\in X^\circ\} \le c_J(X)^{-1}
\]
by the definition of $c_J(X)$.
\end{proof}

In order to establish one of the needed lemmas, we are going to use the fact, proved in \cite{clarke1979}, that the closed characteristics of smooth $\partial X$ are affine images of the solutions of the following variational problem for closed curves $\gamma : \mathbb R/\mathbb Z \to \mathbb R^{2n}$
\begin{equation}
\label{equation:clarke}
\int_\gamma h_X^\omega(\dot\gamma) \to \min, \quad \int_\gamma \lambda = 1,
\end{equation}
where $\lambda$ is a primitive of $\omega$ and 
\[
h_X^\omega(y) = \max\{\omega(x,y)\ |\ x\in X\}
\]
is the symplectic support function of $X$.

\begin{lemma}[Lemma 2.1 in \cite{akopyan2018capacity}]
\label{lemma:symmetry}
In the variational problem \eqref{equation:clarke}, for centrally symmetric convex body $X$, one of the minima is attained at a curve $\gamma$ centrally symmetric with respect to the origin. If $X$ is smooth then the corresponding characteristic of $X$ is also centrally symmetric with respect to the origin.
\end{lemma}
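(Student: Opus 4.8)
The plan is to exploit the antipodal symmetry of the problem by an averaging (symmetrization) argument, reducing the existence of a centrally symmetric minimizer to controlling the symplectic area of the ``non-symmetric part'' of an arbitrary minimizer.

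First I would record the symmetry. Since $X=-X$, the symplectic support function is even, $h_X^\omega(-y)=h_X^\omega(y)$, and it is sublinear (subadditive and positively $1$-homogeneous). Consider the involution $\rho$ acting on loops $\gamma:\mathbb R/\mathbb Z\to\mathbb R^{2n}$ by $(\rho\gamma)(t)=-\gamma(t+\tfrac12)$, the antipode composed with the half-period shift. Using evenness of $h_X^\omega$ and shift-invariance of the integral, the objective $\int_\gamma h_X^\omega(\dot\gamma)$ is $\rho$-invariant; and since the constraint $\int_\gamma\lambda=\tfrac12\int_0^1\omega(\gamma,\dot\gamma)\,dt$ is unchanged both under $\gamma\mapsto-\gamma$ and under the shift, it is $\rho$-invariant as well. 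Hence $\rho$ maps minimizers to minimizers, and the loops fixed by $\rho$ are exactly the centrally symmetric ones, $\gamma(t+\tfrac12)=-\gamma(t)$.

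Next I would symmetrize a given minimizer $\gamma$. Write $\gamma=\gamma_o+\gamma_e$ with $\gamma_o=\tfrac12(\gamma+\rho\gamma)$ ($\rho$-invariant, i.e. centrally symmetric) and $\gamma_e=\tfrac12(\gamma-\rho\gamma)$ ($\rho$-anti-invariant). For the objective, sublinearity and evenness of $h_X^\omega$ give $\int h_X^\omega(\dot\gamma_o)\le \tfrac12\int h_X^\omega(\dot\gamma(t))\,dt+\tfrac12\int h_X^\omega(\dot\gamma(t+\tfrac12))\,dt=\int h_X^\omega(\dot\gamma)$, so the symmetric part has length no larger than that of $\gamma$. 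For the constraint, the area is a quadratic form whose associated bilinear form is preserved by the involution $\rho$; therefore its $(\pm1)$-eigenspaces are orthogonal and the area splits, $\int_\gamma\lambda=\int_{\gamma_o}\lambda+\int_{\gamma_e}\lambda$. Thus $\gamma_o$ is a centrally symmetric competitor of length at most the minimum, enclosing area $1-\int_{\gamma_e}\lambda$. If $\int_{\gamma_e}\lambda\le 0$, then $\gamma_o$ encloses area $\ge1$, and rescaling it to area $1$ (using $1$-homogeneity of the length and $2$-homogeneity of the area) only decreases the length; this produces a centrally symmetric minimizer and finishes the proof.

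The main obstacle is the remaining case, in which the anti-invariant part $\gamma_e$ encloses strictly positive symplectic area: then $\gamma_o$ has deficient area and rescaling it up increases its length, so the averaging does not immediately certify minimality. My plan to rule this out is to use that a genuine minimizer solves the associated Euler--Lagrange equation, i.e. is a closed characteristic, and to combine this with first-order optimality along the one-parameter family $\gamma_o+s\gamma_e$, along which the length is an even convex function of $s$ while the area is the quadratic $\int_{\gamma_o}\lambda+s^2\int_{\gamma_e}\lambda$. The goal is to show that optimality forces the anti-invariant part into the kernel of the area form, equivalently that the minimal characteristic satisfies $\gamma(t)+\gamma(t+\tfrac12)=\mathrm{const}$ (no even harmonics beyond the mean), so that $\gamma_e$ encloses zero area. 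I expect this structural step --- verifying that the minimal closed characteristic carries no area in its antiperiodic complement --- to be the crux of the argument.
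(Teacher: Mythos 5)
Your symmetrization framework is set up correctly: the involution $\rho\gamma(t)=-\gamma(t+\tfrac12)$ preserves both the $h_X^\omega$-length and the constraint of \eqref{equation:clarke}, the area quadratic form splits as $\int_\gamma\lambda=\int_{\gamma_o}\lambda+\int_{\gamma_e}\lambda$, sublinearity gives $\int h_X^\omega(\dot\gamma_o)\,dt\le\int h_X^\omega(\dot\gamma)\,dt$, and in the case $\int_{\gamma_e}\lambda\le 0$ your rescaling does produce a centrally symmetric minimizer. The gap is that the case you defer is not a corner case but the entire content of the lemma. Indeed, your own convexity observation closes the door on getting anything more from minimality: set $L(s)=\int h_X^\omega(\dot\gamma_o+s\dot\gamma_e)\,dt$ and $A(s)=\int_{\gamma_o}\lambda+s^2\int_{\gamma_e}\lambda$; then $L$ is convex with $L(\pm1)=L_{\min}$ (because $\rho\gamma$ is also a minimizer), hence $L(s)\le L_{\min}$ on $[-1,1]$, while comparison with rescaled competitors gives $L(s)\ge L_{\min}\sqrt{A(s)}$ whenever $A(s)>0$. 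Combining, $A(s)\le 1$ for all $s\in(-1,1)$, i.e. $(1-s^2)\int_{\gamma_e}\lambda\ge 0$, so \emph{every} minimizer has $\int_{\gamma_e}\lambda\ge 0$. Thus your ``good case'' is exactly $\int_{\gamma_e}\lambda=0$, and the statement you still need --- that \emph{some} minimizer has $\int_{\gamma_e}\lambda=0$ --- is logically equivalent to the lemma itself (a $\rho$-fixed minimizer has $\gamma_e=0$, and conversely $\int_{\gamma_e}\lambda=0$ makes $\gamma_o$ a symmetric minimizer). The reduction is therefore circular: all the difficulty is concentrated in the step you label the crux.

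Moreover, the route you propose for that crux cannot work as stated. Problem \eqref{equation:clarke} is invariant under orientation-preserving reparameterization of loops ($h_X^\omega$ is positively $1$-homogeneous and $\int_\gamma\lambda$ is a line integral), whereas the quantity $\int_{\gamma_e}\lambda$ is very much parameterization-dependent; so no optimality argument, first-order or otherwise, can force the pointwise identity $\gamma(t)+\gamma(t+\tfrac12)=\mathrm{const}$. Concretely, take $X=B^2\subset\mathbb R^2$, so that $h_X^\omega(v)=|v|$ and \eqref{equation:clarke} is the classical isoperimetric problem, whose minimizers are all positively oriented round circles of the prescribed area with arbitrary center and arbitrary orientation-preserving parameterization. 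For a centered circle $\gamma(t)=re^{i\alpha(t)}$ with $\dot\alpha>0$, $\alpha(t+1)=\alpha(t)+2\pi$, one has $\rho\gamma(t)=re^{i\beta(t)}$ with $\beta(t)=\alpha(t+\tfrac12)+\pi$, and a direct computation gives
\[
\omega\bigl(\gamma_e(t),\dot\gamma_e(t)\bigr)=\frac{r^2}{4}\bigl(\dot\alpha(t)+\dot\beta(t)\bigr)\bigl(1-\cos(\alpha(t)-\beta(t))\bigr)\ \ge\ 0,
\]
which is positive on a set of positive measure unless $\alpha(t+\tfrac12)\equiv\alpha(t)+\pi\pmod{2\pi}$. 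So every asymmetrically parameterized centered circle is a minimizer with $\int_{\gamma_e}\lambda>0$, refuting the structural claim; at best it could hold for specially chosen and specially parameterized minimizers, and proving that is again the lemma. The paper avoids this obstruction with a different, non-averaging device: cut the minimizer into two arcs $\gamma_1,\gamma_2$ of equal $h_X^\omega$-length, translate so the cut points are antipodal points $\pm x$, close each arc by the straight segment between $x$ and $-x$ to obtain loops $\beta_1,\beta_2$ with $\int_{\beta_1}\lambda+\int_{\beta_2}\lambda=1$, keep a half with $\int_{\beta_i}\lambda\ge\tfrac12$, double it by central reflection (same total length as $\gamma$, area $\ge 1$), and rescale. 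The freedom to discard the worse half is precisely what your averaging lacks: taking $\gamma_o$ commits you to both halves at once, and the area lost to $\gamma_e$ cannot be recovered.
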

\begin{proof}
Assume $\gamma$ is the minimal curve, split it into two curves $\gamma_1$ and $\gamma_2$ of equal $h_X^\omega$-lengths. We are going to use the particular primitive $\lambda = \sum_i p_idq_i$, which is invariant under the central symmetry of $\mathbb R^{2n}$.
	
Since the problem is invariant under translations of $\gamma$, we may translate it so that $\gamma_1$ passes from $-x$ to $x$ and $\gamma_2$ passes from $x$ to $-x$. Let $\sigma$ be the straight line segment from $x$ to $-x$ and $-\sigma$ be its opposite. Then the concatenations $\beta_1 = \gamma_1\cup\sigma$ and $\beta_2 = \gamma_2\cup (-\sigma)$ are the closed loops such that
\[
\int_{\beta_1} \lambda + \int_{\beta_2} \lambda = \int_{\gamma} \lambda = 1.
\]
Then without loss of generality we assume $\int_{\beta_1} \lambda \ge 1/2$. Then the centrally symmetric curve $\gamma' = \gamma_1\cup(-\gamma_1)$ has
\[
\int_{\gamma'} \lambda = \int_{\beta_1} \lambda + \int_{-\beta_1} \lambda \ge 1,
\]
and the $h_X^\omega$-length of $\gamma'$ is the same as the length of $\gamma$. Scaling $\gamma'$ to obtain $\gamma''$ with $\int_{\gamma''} \lambda = 1$ we will have centrally symmetric $\gamma''$ with length no greater than the length of the original $\gamma$.

We have established that $\gamma$ is centrally symmetric with respect to the origin and need to show the same for its corresponding characteristic of $X$. Note that in \cite{clarke1979} the variational problem used a $2$-homogeneous Lagrange function $L(\dot\gamma) = 1/2 (h_X^\omega(\dot\gamma))^2$ instead of $1$-homogeneous function $h_X^\omega(\dot\gamma)$ in our version \eqref{equation:clarke}. But those two versions are standardly equivalent since passing from arbitrary curve $\gamma$ to its reparametrization such that $h_X^\omega(\dot\gamma)=\const$ may only decrease the integral of $2$-homogeneous $L$ because of the Cauchy--Schwarz inequality and keeps the integral of $h_X^\omega(\dot\gamma)$ the same. In particular, the solutions to both problems may be assumed parametrized so that $h_X^\omega(\dot\gamma)=\const$.

The Euler--Lagrange equation from \cite{clarke1979} can be written as (after changing $p\mapsto -p$ to suppress the extra minus in the notation of \cite{clarke1979})
\[
\nabla L(\dot\gamma) + \lambda J \gamma(t) = c
\]
for constant $c\in\mathbb R^{2n}$ and $\lambda\neq0\in\mathbb R$. For a centrally symmetric $\gamma$ with period $T$, we have $\gamma(t+T/2)=-\gamma(t)$ and $\dot\gamma(t+T/2)=-\dot\gamma(t)$ for all $t$. Since $L$ is even and its gradient is odd, we see that the left hand side flips the sign when passing from parameter $t$ to $t+T/2$, while the right hand side is a constant. Hence we have $c=0$. Since $c$ is the constant term in the affine transformation that produces a characteristic of $X$ from $\gamma$ in \cite{clarke1979} then the affine transformation is in fact linear and the characteristic is also centrally symmetric with respect to the origin.
\end{proof}

The following lemma about curves on the unit spheres of normed spaces is essentially Theorems 13E and 13F in the book \cite{schaffer1976}.

\begin{lemma}[Schaffer, 1976]
\label{lemma:schaffer}
If $X\subset\mathbb R^d$ is a centrally symmetric convex body and $\gamma\subset \partial X$ is a closed curve centrally symmetric with respect to the origin then the length of $\gamma$ in the norm whose unit ball is $X$ is at least $4 + 4/d$. For odd $d$ the bound can be improved to $4+4/(d-1)$.
\end{lemma}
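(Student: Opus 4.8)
The plan is to reduce the $d$-dimensional statement to a one-dimensional \emph{path} estimate and then to extract the dimensional gain. Parametrize $\gamma$ on $\mathbb R/\mathbb Z$ so that the central symmetry reads $\gamma(t+1/2)=-\gamma(t)$. The length functional $\int_0^1\|\dot\gamma\|\,dt$ (the norm being the gauge of $X$) then splits into two halves of equal length, the second being the antipodal image of the first, so it suffices to prove that every path $\alpha$ on $\partial X$ joining a point $a$ to its antipode $-a$ has length at least $2+2/d$; doubling yields the claimed $4+4/d$.

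First I would record the base estimate. Since arc length dominates chord length, $\mathrm{length}(\alpha)\ge\|(-a)-a\|=\|2a\|=2$, using $a\in\partial X$. Equality would force $\alpha$ to be the straight segment from $a$ to $-a$, which passes through the origin and hence cannot lie on $\partial X$; thus there is always strictly positive excess, and the whole content of the lemma is to bound this excess below by $2/d$ uniformly in $X$.

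For the excess I would argue dually. For any functional $\xi$ with $\|\xi\|_{X^\circ}\le 1$ one has $\|\dot\gamma\|\ge|\xi(\dot\gamma)|$, so the length of $\gamma$ dominates the total variation of $t\mapsto\xi(\gamma(t))$; by the anti-symmetry $\xi(\gamma(t+1/2))=-\xi(\gamma(t))$ this variation is at least $4\max_t\xi(\gamma(t))$. A single $\xi$ only reproduces the value $4$, since the width of $\conv\gamma$ in one direction saturates at $1$. The gain must be squeezed out of the pointwise inequality $\|\dot\gamma\|\ge\max_i|\xi_i(\dot\gamma)|$ for a whole family $\xi_1,\dots,\xi_m\in X^\circ$: when the loop genuinely moves in several independent directions, $\int_0^1\max_i|\xi_i(\dot\gamma)|\,dt$ strictly exceeds the average $\tfrac1m\sum_i\int_0^1|\xi_i(\dot\gamma)|\,dt$ of the one-directional variations. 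Calibrating the $\xi_i$ against the directions in which $\conv\gamma$ touches $\partial X$ — the configuration that in the plane is extremal for the affine-regular hexagon $\conv\{\pm a,\pm b,\pm(b-a)\}$ realizing the sharp value $6=4+4/2$, which reappears as the hexagon of Corollary~\ref{corollary:volr2} — should deliver the extra $4/d$.

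Converting this heuristic into the exact constant is the main obstacle: the one-directional argument stalls at $4$, and controlling the unavoidable multidirectional spreading of a centrally symmetric loop on $\partial X$ precisely, uniformly over all bodies $X$, is the delicate extremal step. The even/odd dichotomy reflects that the extremal spreading directions come in pairs, so in odd dimension one direction is effectively wasted and the bound sharpens to $4+4/(d-1)$; I would recover this by pairing directions (for instance through the complex structure $J$) and reducing the odd case to an even one. For a self-contained treatment one may instead invoke Theorems~13E and 13F of \cite{schaffer1976} directly, which is the route taken here.
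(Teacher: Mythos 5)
The paper does not prove this lemma at all: it is imported directly from Schaffer's book (Theorems 13E and 13F of \cite{schaffer1976}), which is precisely the citation your proposal falls back on in its final sentence, so your treatment ultimately coincides with the paper's. Your preliminary sketch (splitting the symmetric loop into two antipodal arcs, observing that a single dual functional $\xi\in X^\circ$ supporting the curve yields only the trivial bound $4$, and locating all of the difficulty in the excess $4/d$) frames the problem correctly but, as you yourself concede, does not close that gap --- and it contains two slips worth noting: in a non-strictly-convex norm a path attaining the chord length need not be a straight segment, and for odd $d$ there is no complex structure $J$ on $\mathbb{R}^d$ with which to ``pair directions'' --- so the Schaffer citation carries the entire load, exactly as in the paper.
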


\bibliography{../Bib/karasev}
\bibliographystyle{abbrv}
\end{document}